\title[ ]{Glasner property for unipotently generated group actions on tori}
\author{Kamil Bulinski}
\address{School of Mathematics and Statistics, University of Sydney, Australia}
\email{kamil.bulinski@sydney.edu.au}
\author{Alexander Fish}
\address{School of Mathematics and Statistics, University of Sydney, Australia}
\email{alexander.fish@sydney.edu.au}
\begin{document}
\maketitle
\raggedbottom

\newcommand{\cA}{\mathcal{A}}
\newcommand{\cB}{\mathcal{B}}
\newcommand{\cC}{\mathcal{C}}
\newcommand{\cD}{\mathcal{D}}
\newcommand{\cE}{\mathcal{E}}
\newcommand{\cF}{\mathcal{F}}
\newcommand{\cG}{\mathcal{G}}
\newcommand{\cH}{\mathcal{H}}
\newcommand{\cI}{\mathcal{I}}
\newcommand{\cJ}{\mathcal{J}}
\newcommand{\cK}{\mathcal{K}}
\newcommand{\cL}{\mathcal{L}}
\newcommand{\cM}{\mathcal{M}}
\newcommand{\cN}{\mathcal{N}}
\newcommand{\cO}{\mathcal{O}}
\newcommand{\cP}{\mathcal{P}}
\newcommand{\cQ}{\mathcal{Q}}
\newcommand{\cR}{\mathcal{R}}
\newcommand{\cS}{\mathcal{S}}
\newcommand{\cT}{\mathcal{T}}
\newcommand{\cU}{\mathcal{U}}
\newcommand{\cV}{\mathcal{V}}
\newcommand{\cW}{\mathcal{W}}
\newcommand{\cX}{\mathcal{X}}
\newcommand{\cY}{\mathcal{Y}}
\newcommand{\cZ}{\mathcal{Z}}
\newcommand{\bA}{\mathbb{A}}
\newcommand{\bB}{\mathbb{B}}
\newcommand{\bC}{\mathbb{C}}
\newcommand{\bD}{\mathbb{D}}
\newcommand{\bE}{\mathbb{E}}
\newcommand{\bF}{\mathbb{F}}
\newcommand{\bG}{\mathbb{G}}
\newcommand{\bH}{\mathbb{H}}
\newcommand{\bI}{\mathbb{I}}
\newcommand{\bJ}{\mathbb{J}}
\newcommand{\bK}{\mathbb{K}}
\newcommand{\bL}{\mathbb{L}}
\newcommand{\bM}{\mathbb{M}}
\newcommand{\bN}{\mathbb{N}}
\newcommand{\bO}{\mathbb{O}}
\newcommand{\bP}{\mathbb{P}}
\newcommand{\bQ}{\mathbb{Q}}
\newcommand{\bR}{\mathbb{R}}
\newcommand{\bS}{\mathbb{S}}
\newcommand{\bT}{\mathbb{T}}
\newcommand{\bU}{\mathbb{U}}
\newcommand{\bV}{\mathbb{V}}
\newcommand{\bW}{\mathbb{W}}
\newcommand{\bX}{\mathbb{X}}
\newcommand{\bY}{\mathbb{Y}}
\newcommand{\bZ}{\mathbb{Z}}

\newcounter{dummy} \numberwithin{dummy}{section}

\theoremstyle{definition}
\newtheorem{mydef}[dummy]{Definition}
\newtheorem{prop}[dummy]{Proposition}
\newtheorem{corol}[dummy]{Corollary}
\newtheorem{thm}[dummy]{Theorem}
\newtheorem{lemma}[dummy]{Lemma}
\newtheorem{eg}[dummy]{Example}
\newtheorem{notation}[dummy]{Notation}
\newtheorem{remark}[dummy]{Remark}
\newtheorem{claim}[dummy]{Claim}
\newtheorem{Exercise}[dummy]{Exercise}
\newtheorem{question}[dummy]{Question}

\begin{abstract}
A theorem of Glasner from 1979 shows that if $A \subset \bT = \bR/\bZ$ is infinite then for each $\epsilon > 0$ there exists an integer $n$ such that $nA$ is $\epsilon$-dense and Berend-Peres later showed that in fact one can take $n$ to be of the form $f(m)$ for any non-constant $f(x) \in \bZ[x]$. Alon and Peres provided a general framework for this problem that has been used by Kelly-L\^{e} and Dong to show that the same property holds for various linear actions on $\bT^d$. We complement the result of Kelly-L\^{e} on the $\epsilon$-dense images of integer polynomial matrices in some subtorus of $\bT^d$ by classifying those integer polynomial matrices that have the Glasner property in the full torus $\bT^d$. We also extend a recent result of Dong by showing that if $\Gamma \leq \operatorname{SL}_d(\bZ)$ is generated by finitely many unipotents and acts irreducibly on $\bR^d$ then the action $\Gamma \curvearrowright \bT^d$ has a uniform Glasner property.
\end{abstract}

\section{Introduction}

In 1979 Glasner \cite{Glasner79} showed that an infinite subset $A \subset \bT = \bR/\bZ$ satisfies the property that for every $\epsilon>0$ there exists $n \in \bN$ such that $nA$ is $\epsilon$-dense in $\bT$. This was later extended by Berend-Peres \cite{Berend-Peres} in a number of ways. For example, they showed that for each non-constant polynomial $f(x) \in \bZ[x]$ there exists $n \in \bN$ such that $f(n)A$ is $\epsilon$-dense in $\bT$. This motivated them to define a set $S \subset \bN$ to be \textit{Glasner} if for all infinite $A \subset \bT$ and $\epsilon>0$ there exists an $s \in S$ such that $sA$ is $\epsilon$-dense. Turning our attention to more general semigroup actions on metric spaces, we extend this definition as follows.

\begin{mydef} We say that a subset $S$ of a semigroup $\Gamma$ is \textit{Glasner for an action} $\Gamma \curvearrowright X$ on a compact metric space $X$ by continuous maps if for each infinite $Y \subset X$ and $\epsilon >0$ there exists an $s \in S$ such that $sY$ is $\epsilon$-dense. We say that the action $\Gamma \curvearrowright X$ is Glasner if $\Gamma$ is a Glasner set with respect to this action. \end{mydef}

In fact, Berend-Peres realised that a more uniform notion of the Glasner property holds for this action on $\bT$. This leads us to the following definition.

\begin{mydef} If $k: \bR_{>0} \to \bN$ is a function then we say that a subset $S$ of a semigroup $\Gamma$ is $k$-\textit{uniformly Glasner for an action} $\Gamma \curvearrowright X$ on a compact metric space $X$ by continuous maps if there is an $\epsilon_0 >0$ such that for each $0 < \epsilon < \epsilon_0$ and $Y \subset X$ with $|Y|\geq k(\epsilon)$ there exists an $s \in S$ such that $sY$ is $\epsilon$-dense. We say that the action $\Gamma \curvearrowright X$ is $k$-uniformly Glasner if $\Gamma$ is a $k$-uniformly Glasner set with respect to this action. We will also use the phrase \textit{uniformly Glasner} to mean $k$-\textit{uniformly Glasner} for some unspecified $k: \bR_{>0} \to \bN$. \end{mydef}

In particular, Berend-Peres showed that the multiplicative action of $\bN$ acting on $\bT$ is $\left(c_1/\epsilon\right)^{c_2/\epsilon}$-uniformly Glasner\footnote{If $k(\epsilon,c, c_1, \ldots,)$ is an expression involving $\epsilon$ and possibly constants $c, c_i$ etc., by $k(\epsilon)$-uniformly Glasner we always technically mean $k$-uniformly Glasner for the function $k(\epsilon) = k(\epsilon,c ,c_1,\ldots)$ for some choice of $c, c_i>0$.}. Moreover, they also gave a lower bound by showing that there is a set $A_{\epsilon} \subset \bT$ of cardinality $c\epsilon^{-2}$ such that $nA_{\epsilon}$ is not $\epsilon$-dense for all $n \in \bN$. The seminal work of Alon-Peres \cite{Alon-Peres} closed this significant difference in the lower and upper bounds by showing that in fact this action is $\epsilon^{-2-\delta}$-uniformly Glasner for all $\delta>0$. Secondly, Alon-Peres also quantitatively improved the polynomial example by showing that if $f(x) \in \bZ[x]$ is a non-constant polynomial of degree $D$, then the set $\{f(n) ~|~ n \in \bN \}$ is $\epsilon^{-2D - \delta}$-uniformly Glasner for all $\delta>0$.

The Glasner property of linear actions on a higher-dimensional torus $\bT^d$ was studied by Kelly- L\^{e} \cite{Kelly-Le}, where they used the techniques of Alon-Peres \cite{Alon-Peres} to show that the natural action of the multiplicative semi-group $M_{d \times d}(\bZ)$ of $d \times d$ integer matrices on $\bT^d$ is $c_d\epsilon^{-3d^2}$uniformly Glasner. This was later improved by Dong in \cite{Dong1} where he showed, using the same techniques of Alon-Peres together with the deep work of Benoist-Quint \cite{Benoist-Quint}, that the action $\operatorname{SL}_d(\bZ) \curvearrowright \bT^d$ is $c_{\delta, d} \epsilon^{-4d - \delta}$-uniformly Glasner for all $\delta >0$. Furthermore, Kelly-L\^{e}, also gave the following multidimensional generalization of the aforementioned result on the Glasner property of polynomial sequences.

\begin{thm}[Kelly-L\^{e}, Theorem 2 in \cite{Kelly-Le}] \label{thm: Kelly-Le polynomial} Let $A(x) \in M_{d \times d}(\bZ[x])$ be a matrix with integer polynomial entries. Then the following conditions are equivalent.

\begin{enumerate}
	\item The columns of $A(x) - A(0)$ are linearly independent over $\bZ$ (as elements in $\bZ[x]^d$) and whenever $v, w \in \bZ^d$ are such that $$ v \cdot (A(x) - A(0)) w = 0$$ then $v \cdot A(0)w= 0.$

	\item For any infinite subset $Y \subset \bT^d$ there exists a subtorus (non-trivial connected closed Lie subgroup) $\mathcal{T} = \mathcal{T}(Y, A(x))$ such that for all $\epsilon >0$ there exists an $n \in \bZ$ such that, for some $Y_0 \subset Y$, the set $A(n)Y_0 = \{A(n)y ~|~ y \in Y_0 \}$ is $\epsilon$-dense in a translate of $\mathcal{T}$.

\end{enumerate}

\end{thm}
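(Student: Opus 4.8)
I would prove the two implications separately: $(1)\Rightarrow(2)$ by the Fourier / Weyl--sum method of Alon--Peres (as extended to $\bT^{d}$ by Kelly--L\^e), and $(2)\Rightarrow(1)$ by producing, from a failure of $(1)$, an infinite $Y$ whose $A(n)$--images are permanently prevented from equidistributing in any positive--dimensional subtorus.

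\textbf{The subtorus.} Write $B(x):=A(x)-A(0)=\sum_{j=1}^{D}B_{j}x^{j}$ with $B_{j}\in M_{d\times d}(\bZ)$. Given an infinite $Y\subseteq\bT^{d}$, fix $y_{0}\in Y$ and put $Z:=Y-y_{0}$; after replacing $Y$ by an infinite subset we may assume the closed subgroup generated by $Z$ is connected, so $V:=\overline{\langle Z\rangle}$ is a subtorus, the smallest one containing $Z$, and $\dim V\ge1$ because $Y$ is infinite. Set
\[
\mathcal{T}\;:=\;\overline{\,\textstyle\sum_{j=1}^{D}B_{j}(V)\,},
\]
the subtorus generated by the images $B_{j}(V)$. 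Applying the second condition of $(1)$ with $v=\chi$ and $w$ in the lattice of $V$ shows that if $\chi^{\top}B_{j}$ annihilates $V$ for every $j$ then so does $\chi^{\top}A(0)$; hence $A(0)(V)\subseteq\mathcal{T}$, and since $B(n)=\sum_{j}n^{j}B_{j}$ and $\mathcal{T}$ is a subgroup we get $A(n)(V)\subseteq\mathcal{T}$, so that $A(n)Y\subseteq A(n)y_{0}+\mathcal{T}$ for every $n$. The first condition of $(1)$ --- that $B_{1},\dots,B_{D}$ have no common nonzero kernel vector --- forces $\mathcal{T}\neq\{0\}$, since a nonzero integer vector in the span of $V$ (which exists as $\dim V\ge1$) would otherwise be killed by all $B_{j}$. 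Thus it suffices to find, for each $\varepsilon>0$, some $n\in\bZ$ and finite $Z_{0}\subseteq Z$ with $\{A(n)z:z\in Z_{0}\}$ being $\varepsilon$--dense in $\mathcal{T}$.

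\textbf{The analytic step, and the main obstacle.} Write $e(t):=e^{2\pi it}$. Fix a smooth nonnegative bump $\psi$ on $\mathcal{T}$ concentrated near $0$ at scale $\varepsilon$; then a finite $F\subseteq\mathcal{T}$ is $\varepsilon$--dense once $\inf_{t\in\mathcal{T}}\frac{1}{|F|}\sum_{f\in F}\psi(t-f)>0$, and expanding $\psi=\sum_{\chi}\widehat{\psi}(\chi)\chi$ over characters $\chi$ of $\mathcal{T}$ (lifted to $\bZ^{d}$) this reduces to $\sum_{\chi\neq0}|\widehat{\psi}(\chi)|\,|S_{F}(\chi)|<\widehat{\psi}(0)$, where $S_{F}(\chi)=\frac{1}{|F|}\sum_{f\in F}e(-\chi\cdot f)$. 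Taking $F=F_{n}:=\{A(n)z:z\in Z_{0}\}$ gives $S_{F_{n}}(\chi)=\frac{1}{|Z_{0}|}\sum_{z\in Z_{0}}e\bigl(-(A(n)^{\top}\chi)\cdot z\bigr)$, an exponential sum in $n$ over the real polynomial $n\mapsto(A(0)^{\top}\chi)\cdot z+\sum_{j\ge1}\bigl((B_{j}^{\top}\chi)\cdot z\bigr)n^{j}$ of degree $\le D$. Applying Cauchy--Schwarz with weights $|\widehat{\psi}(\chi)|$ and then averaging over $n$ in a dyadic window $[N,2N]$, one is left with the diagonal term $1/|Z_{0}|$ plus, for $z\neq z'$ in $Z_{0}$, the Weyl sums $\frac1N\bigl|\sum_{N<n\le 2N}e\bigl(-(A(n)^{\top}\chi)\cdot(z-z')\bigr)\bigr|$. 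The algebraic heart is that a character $\chi$ nontrivial on $\mathcal{T}$ has $\chi^{\top}B_{j_{0}}$ nontrivial on $V$ for some $j_{0}$, so --- $V$ being the smallest subtorus containing $Z$ --- the form $z\mapsto(\chi^{\top}B_{j_{0}})\cdot z$ is not identically zero on $Z$, and Weyl's inequality then makes the displayed sums small except for the \emph{resonant} pairs, those $(z,z')$ with $(\chi^{\top}B_{j})\cdot(z-z')$ near a rational of small denominator for all $j$. The main obstacle --- exactly as in Alon--Peres --- is to show that for a single well--chosen finite $Z_{0}$ (of size polynomial in $1/\varepsilon$) and a single window $[N,2N]$, the resonant pairs form a negligible fraction of $Z_{0}\times Z_{0}$ \emph{simultaneously for all} $\chi$ nontrivial on $\mathcal{T}$. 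This is where infinitude of $Y$ is used: one takes $Z_{0}$ suitably separated, bounds the resonant pairs by a pigeonhole over cosets of the kernels of the forms $\chi^{\top}B_{j}$, and picks $n$ avoiding the finitely many low--height obstructions (for each low--height $\chi$ the polynomial $n\mapsto(A(n)^{\top}\chi)\cdot z$ degenerates for at most $D$ values of $n$), while high--height $\chi$ are absorbed into the decay of $\widehat{\psi}$. Balancing the height of $\chi$, the denominators that arise, the separation of $Z_{0}$ and the admissible range of $n$ is the technical crux of this direction.

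\textbf{The converse.} Suppose $(1)$ fails. Then there are a primitive $w\in\bZ^{d}$ and a $v\in\bZ^{d}$ with $v^{\top}B_{j}w=0$ for all $j$ and with \emph{either} $B_{j}w=0$ for all $j$ \emph{or} $v^{\top}A(0)w\neq0$: the first alternative comes from a $\bZ$--linear dependence among the columns of $B(x)$, the second from a failure of the implication in $(1)$ (make $w$ primitive by dividing out its content). In both cases $m:=v^{\top}A(n)w=v^{\top}A(0)w$ does not depend on $n$, so $\gcd(A(n)w)$ divides $m$; hence for every $n$ the vector $A(n)w$ is either $0$ or generates a rational $1$--dimensional subtorus $L_{n}$ of uniformly bounded covolume. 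Let $Y:=\{\,2^{-k}w\bmod\bZ^{d}:k\ge1\,\}$, a very sparse infinite set on the rational line through $w$ (one can take an even sparser sequence if needed). For every $n$, $A(n)Y=\{\,2^{-k}A(n)w\bmod\bZ^{d}:k\ge1\,\}$ is either $\{0\}$ or a lacunary set lying in $L_{n}$ and clustering at $0$; such a set --- and every finite subset of it --- fails to be $\varepsilon$--dense in a translate of any nontrivial subtorus once $\varepsilon$ is below a threshold depending only on the covolume bound, because a positive--dimensional subtorus translate has diameter $\ge 1/2$ in $\bT^{d}$ and cannot be covered by $\varepsilon$--balls about a lacunary, essentially $1$--dimensional set. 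The delicate point here is to rule out $\varepsilon$--density even for large $n$, where $L_{n}$ winds densely through $\bT^{d}$; this is handled by the lacunarity of $Y$ (and, if necessary, by choosing $Y$ lacunary with respect to each prime, pinning the torsion images to sparse subgroups). Hence no $\mathcal{T}$ can witness $(2)$ for this $Y$, and $(2)$ fails.
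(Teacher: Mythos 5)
First, note that the paper does not prove this statement at all: it is quoted from Kelly--L\^e (Theorem 2 of \cite{Kelly-Le}) and used as background, and the paper explicitly remarks that the Kelly--L\^e proof is non-constructive, obtaining $\mathcal{T}$ via Ramsey's theorem. So your proposal has to stand on its own, and as written it does not. In the direction $(1)\Rightarrow(2)$ you commit to a specific constructive subtorus $\mathcal{T}=\overline{\sum_j B_j(V)}$, with $V$ generated by the differences of $Y$, but the entire analytic core (controlling the resonant pairs uniformly over all characters nontrivial on $\mathcal{T}$, choosing $Z_0$ and the window of $n$, handling sets $Z$ consisting of rational points of uncontrolled denominators) is explicitly deferred as ``the technical crux''; that is not a proof but a plan, and it is precisely at this point that the published argument resorts to a Ramsey-type extraction of a subset of $Y$ with uniform difference structure -- there is no evidence that your larger, constructively chosen $\mathcal{T}$ actually works.

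The converse direction contains a concrete error. When $(1)$ fails through the second alternative (there exist $v,w$ with $v^{\top}(A(x)-A(0))w=0$ but $c:=v^{\top}A(0)w\neq 0$), your witness $Y=\{2^{-k}w\}$ need not falsify $(2)$. Take $d=2$, $A(x)=\begin{pmatrix} x & 1\\ 0 & x\end{pmatrix}$, $v=(1,0)$, $w=(0,1)$: condition $(1)$ fails, and $A(n)(0,2^{-k})=(2^{-k},\,n2^{-k})$. Given $\epsilon>0$, choose $n$ whose binary expansion contains every string of length $L=\lceil\log_2(2/\epsilon)\rceil$ in positions above $L$ (a de Bruijn-type integer); then $\{n2^{-k}\bmod 1: k\geq L\}$ is $\epsilon$-dense in $\bT$ while $2^{-k}<\epsilon$, so $A(n)Y$ is $\epsilon$-dense in the subtorus $\{0\}\times\bT$. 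Hence $(2)$ holds for this $Y$ with $\mathcal{T}=\{0\}\times\bT$, and making the sequence sparser (or using several primes) only makes such an $n$ easier to find by CRT or disjoint digit windows. The underlying mistakes are that $L_n$ does not have uniformly bounded covolume (only $\gcd(A(n)w)\leq |c|$ is bounded, while the length $\|A(n)w\|$ grows), and lacunarity of $Y$ does not survive multiplication by the large matrices $A(n)$: the fixed character $v$ only obstructs density in subtori on which $v$ is nontrivial, so a correct proof of $(2)\Rightarrow(1)$ must build a $Y$ that simultaneously defeats the subtori annihilated by $v$, which your construction does not do. (By contrast, the paper's own, much easier implication $(2)\Rightarrow(1)$ of Theorem~\ref{thm: main polynomial intro} only needs to defeat density in the full torus, which is why the single character $v$ suffices there.)
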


The following main result of this paper characterizes those $A(x) \in M_{d \times d}(\bZ[x])$ which satisfy the stronger property that $\{A(n) ~|~ n \in \bZ\}$ is Glasner (for the natural linear action on $\bT^d$), i.e., it characterizes when we can take the subtorus $\mathcal{T}$ to be the full $\bT^d$. 

\begin{thm} \label{thm: main polynomial intro} Let $A(x) \in M_{d \times d}(\bZ[x])$ be a matrix with integer polynomial entries. Then the following conditions are equivalent.

\begin{enumerate}
	\item \label{hyperplane fleeing main thm} For all $v \in \bZ^d\setminus\{ 0 \}$ and $w \in \bZ^d\setminus\{0\}$ we have that $$v \cdot (A(x) - A(0))w \neq 0.$$ 
	\item \label{glasner conditon main thm} The set $\{A(n) ~|~ n \in \bZ \}$ is $c_1 \epsilon^{-c_2}$-uniformly Glasner for the linear action $M_{d \times d}(\bZ[x]) \curvearrowright \bT^d$ for some constants $c_1, c_2  >0$ depending on $A(x)$. That is, for every $Y \subset \bT^d$ with $|Y| > c_1 \epsilon^{-c2}$ there exists $n \in \bZ$ such that $A(n)Y$ is $\epsilon$-dense in $\bT^d$.
\end{enumerate}

\end{thm}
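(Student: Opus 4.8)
The plan is to deduce Theorem~\ref{thm: main polynomial intro} from the general framework of Alon--Peres, following the Fourier-analytic strategy. Recall that to show a set $S$ of matrices is uniformly Glasner it suffices, after a standard averaging argument, to show that for any finite $Y \subset \bT^d$ of size $N = |Y|$ and any nontrivial character $\chi_v(x) = e(v\cdot x)$ with $v \in \bZ^d \setminus \{0\}$, the average over $n$ in a suitable window $[1,\Lambda]$ of $\left| \frac{1}{N}\sum_{y \in Y} e\bigl(v \cdot A(n) y\bigr)\right|^2$ is small (say $O(N^{-c})$ up to a controlled number of ``bad'' $n$). Expanding the square, this average becomes $\frac{1}{N^2}\sum_{y,y' \in Y} \frac{1}{\Lambda}\sum_{n=1}^{\Lambda} e\bigl(v \cdot A(n)(y - y')\bigr)$, so the quantity that must be estimated is, for fixed $v$ and fixed $z = y - y' \in \bT^d$, the exponential sum $\frac{1}{\Lambda}\sum_{n=1}^{\Lambda} e\bigl( \langle v \cdot A(n), z\rangle \bigr)$ where $v \cdot A(n) = v\cdot A(0) + (v\cdot(A(n)-A(0)))$ and the second term is a vector of integer polynomials in $n$, each of which, by hypothesis~\eqref{hyperplane fleeing main thm}, is a \emph{nonzero} polynomial (since $v \cdot (A(x)-A(0)) w \neq 0$ for every nonzero $w$, no integer linear combination of the columns of $A(x)-A(0)$ hit by $v$ can vanish identically --- in particular each coordinate $v\cdot(A(x)-A(0))e_j$ is nonzero).

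The main work is therefore a Weyl-type equidistribution estimate: I would show that if $q(x) \in \bZ[x]^d$ is a vector of polynomials that is not identically a constant vector, then for any $z \in \bT^d$ that is not too well approximated by rationals the sums $\frac{1}{\Lambda}\sum_{n\leq\Lambda} e(q(n)\cdot z)$ are small, and more precisely one controls the number of $z$ (equivalently, the number of pairs $y,y'$) for which this fails in terms of a rational-approximation / large-sieve input. This is precisely the shape of argument carried out by Alon--Peres for $d=1$ and adapted by Kelly--L\^e; the point of hypothesis~\eqref{hyperplane fleeing main thm} is exactly that it rules out the obstruction where some nonzero $v$ makes $v \cdot (A(n) - A(0))$ identically zero, which would confine all the images $A(n)Y$ to a fixed proper subtorus (the kernel direction), making $\epsilon$-density in $\bT^d$ impossible and hence forcing condition~\eqref{glasner conditon main thm} to fail. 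So the implication \eqref{glasner conditon main thm} $\Rightarrow$ \eqref{hyperplane fleeing main thm} is the easy direction: if \eqref{hyperplane fleeing main thm} fails, pick $v \neq 0$ and $w \neq 0$ with $v\cdot(A(x)-A(0))w = 0$; then for $Y$ an infinite subset of the line $\bR w / \bZ^d$ (or rather of $\{tw : t\in\bT\}$ suitably interpreted), $A(n)Y$ lies in a single coset of the hyperplane $v^\perp$ shifted by $A(0)$ applied to that line, so it is never $\epsilon$-dense for small $\epsilon$ --- in fact $|Y|$ can be arbitrarily large, defeating uniform (indeed plain) Glasner.

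For the hard direction \eqref{hyperplane fleeing main thm} $\Rightarrow$ \eqref{glasner conditon main thm}, after the reduction above the key steps are: (i) reduce $\epsilon$-density of $A(n)Y$ in $\bT^d$ to smallness of $\widehat{(A(n)_* \mu_Y)}(v)$ uniformly over $0 < \|v\|_\infty \leq R(\epsilon)$ where $\mu_Y$ is the uniform measure on $Y$ and $R(\epsilon) \sim \epsilon^{-1}$, via a smooth Fourier cutoff (this is the standard ``$\epsilon$-density iff small nonzero Fourier coefficients'' lemma, e.g.\ as in Alon--Peres); (ii) second-moment expansion in $n$ over $[1,\Lambda]$ with $\Lambda = \Lambda(N,\epsilon)$ a suitable power of $N$, reducing to bounding $\frac{1}{\Lambda}\sum_n e(q_v(n)\cdot z)$ for $z$ ranging over differences of points of $Y$ and $q_v(x) := v\cdot(A(x)-A(0)) \in \bZ[x]^d \setminus \{0\}$; (iii) a Weyl/van der Corput estimate combined with a counting argument (à la Alon--Peres, or via the large sieve) to show that the number of $z \in \bT^d$ for which $\bigl|\frac{1}{\Lambda}\sum_n e(q_v(n)\cdot z)\bigr| > \delta$ is at most $\delta^{-O(1)} \Lambda^{O(1)}$-ish, which when divided by $N^2$ and optimized in $\Lambda$ yields a bound of the form $c_1 \epsilon^{-c_2}$ on the threshold $|Y|$; (iv) finally, uniformity over the finitely-many-up-to-scaling relevant $v$ (there are $\lesssim \epsilon^{-d}$ of them) is absorbed by a union bound, slightly degrading $c_2$. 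I expect step~(iii) --- the simultaneous control of the exceptional set of $z$ across all coordinates of the vector polynomial $q_v$ and all $v$, with polynomial-in-$\epsilon^{-1}$ bookkeeping --- to be the main obstacle, since one must extract genuine multidimensional equidistribution from the single scalar non-vanishing hypothesis; the resolution is that $q_v \neq 0$ means \emph{some} coordinate $q_{v,j}$ is a nonconstant polynomial, which is enough to run the one-dimensional Alon--Peres machinery in that coordinate after projecting, and then a direct product / Cauchy--Schwarz argument handles the rest, at the cost of worse but still polynomial constants $c_1, c_2$ depending on $d$, $\deg A$, and the coefficients of $A(x)$.
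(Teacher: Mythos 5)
Your reduction for (\ref{glasner conditon main thm}) $\Rightarrow$ (\ref{hyperplane fleeing main thm}) is not correct as stated. If $v\cdot(A(x)-A(0))w=0$ and $y=tw$, then $v\cdot A(n)y=t\,(v\cdot A(0)w)$, which depends on $y$: the set $A(n)Y$ lies in a \emph{union} of cosets of $v^{\perp}$, one for each $y\in Y$, not a single coset (a single coset only occurs when $v\cdot A(0)w=0$). For an arbitrary infinite subset of the line $\{tw\}$ the values $t\,(v\cdot A(0)w)\bmod 1$ may well be dense in $\bT$, and then the character $v$ gives no obstruction to $\epsilon$-density, so "never $\epsilon$-dense" does not follow. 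You must pick $Y$ so that these $n$-independent values avoid a fixed arc; for instance $Y=\{\frac1m w \bmod \bZ^d : m\in\bZ_{>0}\}$, for which $v\cdot A(n)y=\frac1m\,v\cdot A(0)w\to 0$, is such a witness (and is the one used in the paper). This is an easy repair, but the argument you wrote does not establish the implication.

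The more serious gap is in your step (iii) of the hard direction. The quantity to bound is $\frac1N\sum_{n}e\bigl(\sum_j q_{v,j}(n)z_j\bigr)$ with $q_v=v^t(A(x)-A(0))$ and $z=y-y'$, and your proposed resolution --- that some coordinate $q_{v,j}$ is nonconstant, so one can "project" to that coordinate, run the one-dimensional Alon--Peres machinery, and handle the rest by Cauchy--Schwarz --- cannot work: the phase mixes all coordinates, and the exceptional $z$ are exactly those for which the full combination $\sum_j q_{v,j}(x)z_j$ is (nearly) a constant polynomial, or is rational $z=\vec a/q$ with the nonconstant coefficients of $v^tA(x)\vec a$ sharing a large factor with $q$; neither phenomenon is visible in a single coordinate, and no product/Cauchy--Schwarz trick recovers it. What is actually needed, and what hypothesis (\ref{hyperplane fleeing main thm}) delivers (Lemma~\ref{lemma: basic hyperplane-fleeing}), is the $\bZ$-linear independence of \emph{all} entries of $v^t(A(x)-A(0))$, used in two distinct ways: for irrational $z$ it forces $\sum_j q_{v,j}(x)z_j$ to have an irrational nonconstant coefficient, so Weyl equidistribution (after letting $N\to\infty$, which also spares you any quantitative Weyl or large-sieve input) annihilates those terms; for rational $z=\vec a/q$ with $\gcd(\vec a,q)=1$, a Smith-normal-form determinant bound (Proposition~\ref{prop: divisor estimate}, Corollary~\ref{corol: hyperplane fleeing implies bounded multiplicative complexity}) gives a uniform bound on $\gcd$ of $q$ with the nonconstant coefficients of $v^tA(x)\vec a$, which is precisely what makes Hua's estimate (Theorem~\ref{thm: Hua bound}) effective, and the pair-counting bound $\sum_q h_qq^{-r}\ll k^{2-r/(d+1)}$ (Proposition~\ref{prop: h_q sum bound}) then closes the second moment coming from Proposition~\ref{prop: fourier estimate for k^2}. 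Your outline is the right general Alon--Peres/Kelly--L\^e framework, and the union bound over $\lesssim\epsilon^{-d}$ frequencies $v$ is as in the paper, but without the divisor/gcd estimate and the full linear-independence input the rational-difference contribution is uncontrolled and the proof does not close, nor does it yield the claimed polynomial bounds $c_1\epsilon^{-c_2}$ (made explicit in Theorem~\ref{thm: quantitative Glasner polynomial}).
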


\begin{remark} As we shall see in the $(\ref{glasner conditon main thm}) \implies (\ref{hyperplane fleeing main thm})$ proof, in condition (\ref{glasner conditon main thm}) of Theorem~\ref{thm: main polynomial intro} one can replace \textit{$c_1 \epsilon^{-c_2}$-uniformly Glasner} with the weaker condition of being just Glasner. So Glasner and $c_1 \epsilon^{-c_2}$-uniformly Glasner are equivalent for sets of the form $\{A(n) ~|~ n \in \bZ \}$ for some $A(x) \in M_{d \times d}(\bZ[x])$.

\end{remark}

Let us remark that, as stated, the subtorus $\mathcal{T}$ in Theorem~\ref{thm: Kelly-Le polynomial} depends on $Y$ and not just $A(x)$ and the proof in \cite{Kelly-Le} is not constructive as it makes use of Ramsey's Theorem on graph colourings to demonstrate the existence of such a $\mathcal{T}$. Thus it does not seem that our result can be easily derived from the result or techniques of Kelly-L\^{e}. Note that in Theorem~\ref{thm: quantitative Glasner polynomial} we will provide an effective estimate on the uniformity (estimates on the constants $c_1$ and $c_2$).

It will be convenient to give some alternative formulations and geometrically intuitive extensions of condition (\ref{hyperplane fleeing main thm}) in Theorem~\ref{thm: main polynomial intro}.

\begin{mydef} A set $S \subset \bR^d$ is said to be \textit{hyperplane-fleeing} if for all proper affine subspaces $H$ of $\bR^d$ (i.e., $H = W + a$ for some proper vector subspace $W \subset \bR^d$ and $a \in \bR^d$) we have that $S \not\subset H$. \end{mydef}

Thus, condition (\ref{hyperplane fleeing main thm}) in Theorem~\ref{thm: main polynomial intro} is equivalent to the statement that for each non-zero $w \in \bZ^d \setminus \{0 \}$ the orbit $\{A(n)w ~|~ n \in \bZ\}$ is hyperplane-fleeing (as it is not a subset of the hyperplane $\{x \in \bR^d ~|~ v \cdot x - v \cdot A(0)w = 0\}$ for any $v \in \bZ^d \setminus \{0\}$ and in fact any $v \in \bR^d \setminus \{ 0\}$ as $A(x)$ has integer polynomial entries). This hyperplane-fleeing property of the orbits is related to the irreducibility of linear group actions. Indeed, it is easy to see that if $d>1$, and $\Gamma \leq M_{d \times d}(\bZ)$ is a semigroup whose action on $\bR^d$ is irreducible, then the orbit of any non-zero vector $v \in \bR^d \setminus \{0 \}$ is hyperplane-fleeing (we prove a stronger statement in Lemma~\ref{lemma: hyperplane fleeing Cayley ball}). This enables us to use Theorem~\ref{thm: main polynomial intro} to deduce the Glasner property for various irreducible representations. For instance, we recover in a more elementary way (by avoiding the deep work of Benoist-Quint \cite{Benoist-Quint}) the aforemnentioned result of Dong but with weaker (but still polynomial in $\epsilon^{-1}$) uniformity bounds. In general, we will demonstrate that subgroups generated by a finite set of unipotent elements of $\operatorname{SL}_d(\bZ)$ that act irreducibly on $\bT^d$ satisfy the uniform Glasner property.

\begin{thm}\label{thm: main thm unipotent intro} Let $d>1$ and let $u_1, \ldots, u_m \in \operatorname{SL}_d(\bZ)$ be unipotent elements such that the action of the subgroup $\Gamma = \langle u_1, \ldots, u_m \rangle$ on $\bR^d$ is irreducible. Then there exists $c_1, c_2$ (depending on $\Gamma$) such that the following is true: For each $\epsilon > 0$ there exists an integer $k \leq c_1 \epsilon^{-c_2}$ such that for any distinct $x_1, \ldots, x_k \in \bT^d$ there exists $\gamma \in \Gamma$ such that $\{\gamma x_1, \ldots, \gamma x_k \}$ is $\epsilon$-dense in $\bT^d$. In other words, the action of $\Gamma$ on $\bT^d$ is $c_1\epsilon^{-c_2}$ uniformly Glasner.

\end{thm}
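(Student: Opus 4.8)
The plan is to deduce this from Theorem \ref{thm: main polynomial intro} applied cleverly inside the group $\Gamma$. The key observation is that for each unipotent $u_i$, the one-parameter family $n \mapsto u_i^n$ is polynomial in $n$: if $u_i = I + N_i$ with $N_i$ nilpotent, then $u_i^n = \sum_{j\geq 0}\binom{n}{j}N_i^j$, which is a matrix with entries in $\bQ[n]$ (and takes integer values at integer $n$). So $A_i(x) := u_i^x \in M_{d\times d}(\bQ[x])$ and, after clearing denominators in a way that does not affect the argument (or simply noting the Alon-Peres machinery works for rational polynomials taking integer values — alternatively work with $u_i^{x}$ evaluated along a suitable sublattice), we are in a position to ask whether $A_i(x)$ satisfies condition (\ref{hyperplane fleeing main thm}). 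It will \emph{not} in general — a single unipotent always fixes a vector, so $\{u_i^n w\}$ lies in a hyperplane for many $w$ — so a single $u_i$ is never Glasner on the full torus. The point of irreducibility is to combine them.

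The main step is therefore: first apply the (uniform) Glasner property of a \emph{single} unipotent $u_1$ to spread an arbitrary large finite set $Y$ so that $u_1^{n_1}Y$ becomes $\epsilon'$-dense in a translate of the subtorus $\mathcal{T}_1$ corresponding to the unipotent one-parameter orbit — here one invokes Theorem \ref{thm: Kelly-Le polynomial} or, better, a direct quantitative version à la Alon-Peres for the polynomial matrix $u_1^x$, which gives a polynomial-in-$\epsilon^{-1}$ bound on $|Y|$. The subtorus $\mathcal{T}_1$ is governed by which $v \cdot (u_1^x - I)w$ vanish, i.e. by the fixed-space structure of $N_1$. Then feed the resulting $\epsilon'$-dense subset of a translate of $\mathcal{T}_1$ through another group element $\gamma_2 \in \Gamma$ chosen so that $\gamma_2 \mathcal{T}_1$ together with $\mathcal{T}_1$ spans more; iterate. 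By the irreducibility of $\Gamma \curvearrowright \bR^d$, the rational subspaces $\operatorname{Lie}(\mathcal{T}_i)$ and their $\Gamma$-translates cannot all be contained in a common proper rational subspace (this is exactly Lemma \ref{lemma: hyperplane fleeing Cayley ball}, applied to $\Gamma$ acting on $\bR^d$, and dually on $(\bR^d)^*$), so after finitely many ($\le d$) such steps the images span all of $\bR^d$ and the accumulated image is $\epsilon$-dense in all of $\bT^d$. Tracking the loss at each of the $O(d)$ steps — each step degrades $\epsilon$ by a bounded power and multiplies the required cardinality by a bounded power — yields the claimed $c_1 \epsilon^{-c_2}$ bound with $c_1, c_2$ depending on $\Gamma$ (through the $u_i$, their nilpotency degrees, and the number of iterations needed).

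The key technical lemma I would isolate and prove is a \emph{gluing/propagation} statement: if $Z \subset \bT^d$ is $\delta$-dense in a translate $t + \mathcal{S}$ of a rational subtorus $\mathcal{S}$, and $w \in \bZ^d$ is such that $\{u^n w ~|~ n \in \bZ\}$ is not contained in $\operatorname{Lie}(\mathcal{S})^\perp + c$ for suitable $c$, then for some $n$ the set $u^n Z$ is $\delta'$-dense in a translate of a strictly larger rational subtorus $\mathcal{S}'$, with $\delta' = \operatorname{poly}(\delta)$ and no increase in cardinality. This is proved by decomposing $\mathbf{T}^d \cong \mathcal{S} \times (\mathbf{T}^d/\mathcal{S})$ up to isogeny, noting $u^n$ acts on the quotient, and applying the one-dimensional-torus / subtorus Glasner input of Berend-Peres (quantitatively, Alon-Peres) in the quotient direction while retaining density in the $\mathcal{S}$-direction; the polynomial dependence of $u^n$ on $n$ is what makes the relevant exponential-sum / Weyl-type bounds go through with polynomial uniformity.

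The hard part will be the bookkeeping in the gluing step: ensuring that after applying $u^n$ one genuinely gains a new independent direction rather than merely rotating within $\mathcal{S}$, and doing so while the $\epsilon$-loss stays polynomial and the cardinality does not blow up. Concretely the obstacle is that $u^n$ need not preserve the splitting $\mathbf{T}^d = \mathcal{S} \times \mathbf{T}^d/\mathcal{S}$, so one must either choose $\mathcal{S}$ to be $u$-invariant (which is why the unipotent one-parameter orbits and their flag structure are the right building blocks) or absorb the non-invariance into a controlled error. Irreducibility of $\Gamma$ is used precisely to guarantee that this gain is always possible until $\mathcal{S}$ becomes everything: no proper rational $\Gamma$-invariant — indeed no proper rational subspace containing a full unipotent orbit and surviving all the $\gamma_j$ — can exist, by Lemma \ref{lemma: hyperplane fleeing Cayley ball}.
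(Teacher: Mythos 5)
Your proposal heads down a genuinely different (and much harder) road than the paper, and the central step is missing. The crux of your plan is the ``gluing/propagation'' lemma, which you state but do not prove, and as formulated it is problematic: if $Z$ lies (exactly or essentially) in a single coset $t+\mathcal{S}$, then its projection to $\bT^d/\mathcal{S}$ is a single point, so there is no large set of distinct points in the quotient direction for any Berend--Peres/Alon--Peres input to act on; the only way $u^nZ$ can become dense in a higher-dimensional subtorus is by shearing $\mathcal{S}$ itself into a long wrapping subtorus via elements of large norm, and then the metric distortion of $u^n$ (comparable to $\|u^n\|$) eats into $\delta$, so the claim $\delta'=\operatorname{poly}(\delta)$ with a gain of an independent direction at each of $\le d$ stages is exactly the unproved bookkeeping. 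Moreover, your quantitative first step is not available off the shelf: the subtorus in Theorem~\ref{thm: Kelly-Le polynomial} depends on $Y$ and is produced non-constructively via Ramsey's theorem (the paper explicitly remarks that its result does not seem derivable from that route), so a ``direct quantitative version for $u_1^x$'' with polynomial bounds would itself require a proof you have not supplied. Finally, the subtorus attached to $u_1$ is not determined only by which $v\cdot(u_1^x-I)w$ vanish; it also depends on $Y$.

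The paper's actual argument bypasses all of this with one construction (Proposition~\ref{prop: unipotent implies polynomial}): since each $u_i^{n}$ has polynomial entries in $n$, the product $Q_N(n_1,\ldots,n_N)=\prod_{i=1}^{N}u_i^{n_i}$ (cyclic indexing, $N=dm$) is a matrix of multivariable integer polynomials whose specializations lie in $\Gamma$ and whose orbits $\{Q_N(\vec n)w\}$ are hyperplane-fleeing for every $w\neq 0$ by the Cayley-ball Lemma~\ref{lemma: hyperplane fleeing Cayley ball} (this is where irreducibility enters, once and for all). A substitution $n_i\mapsto n^{R^{i-1}}$ that is injective on the monomials of $Q_N$ then produces a single one-variable matrix $A(x)$ with $A(n)\in\Gamma$ and hyperplane-fleeing orbits, and the quantitative Theorem~\ref{thm: quantitative Glasner polynomial} applies in one shot to give the $c_1\epsilon^{-c_2}$ uniform Glasner bound. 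Your observation that a single unipotent cannot suffice and that irreducibility must combine the generators is correct, but without the idea of packaging the whole Cayley ball into one polynomial family (and without a proof of your gluing lemma), the proposal as it stands does not establish the theorem.
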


This will follow by showing (see Proposition~\ref{prop: unipotent implies polynomial}) that $\Gamma$ contains such a polynomial satisfying the condition (\ref{hyperplane fleeing main thm}) of Theorem~\ref{thm: main polynomial intro}.

Let us now explore some examples of such subgroups other than $\operatorname{SL}_d(\bZ)$ (which is an example as $\operatorname{SL}_d(\bZ)$ is generated by the finitely many elementary matrices obtained from changing a single $0$ to a $1$ in the identity matrix).

\begin{thm} Let $Q(x,y,z) = xy - z^2$ or $Q(x,y,z) = x^2 - y^2 - z^2$. Let $\Gamma = \operatorname{SO}_{\bZ}(Q)$ be subgroup of $\operatorname{SL}_d({\bZ})$ preserving this quadratic form. Then the action of $\Gamma$ on $\bT^d$ is uniformly Glasner.

\end{thm}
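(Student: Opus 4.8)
The plan is to reduce this statement to Theorem~\ref{thm: main thm unipotent intro}. Since that theorem handles finitely-generated unipotent subgroups of $\operatorname{SL}_d(\bZ)$ acting irreducibly on $\bR^d$, it suffices to verify that $\Gamma = \operatorname{SO}_{\bZ}(Q)$ (for $Q(x,y,z) = xy - z^2$ or $Q(x,y,z) = x^2 - y^2 - z^2$, so $d = 3$) satisfies two things: (i) $\Gamma$ is generated by finitely many unipotent elements, and (ii) the action of $\Gamma$ on $\bR^3$ is irreducible. I would first handle (ii): the quadratic forms $xy - z^2$ and $x^2 - y^2 - z^2$ are nondegenerate indefinite rational forms in $3$ variables, and a standard argument shows that the real points $\operatorname{SO}(Q)(\bR) \cong \operatorname{SO}(2,1)$ act irreducibly on $\bR^3$ (the only invariant subspaces of the full orthogonal group would have to be nondegenerate or totally isotropic $\operatorname{SO}(Q)(\bR)$-invariant subspaces, and irreducibility of the standard representation of $\operatorname{SO}(2,1)^\circ$ is classical). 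Since $\operatorname{SO}_{\bZ}(Q)$ is a lattice in $\operatorname{SO}(Q)(\bR)$ by Borel--Harish-Chandra, and a lattice in a group acting irreducibly acts irreducibly (by Borel density, as $\operatorname{SO}(Q)(\bR)$ has no compact factors — or more simply, any $\Gamma$-invariant subspace is invariant under the Zariski closure of $\Gamma$, which is all of $\operatorname{SO}(Q)$), irreducibility of $\Gamma \curvearrowright \bR^3$ follows.

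The main work is (i): producing an explicit finite generating set of unipotents, or invoking a structural result. The cleanest route is to observe that $\operatorname{SO}_{\bZ}(Q)$ is commensurable with $\operatorname{SL}_2(\bZ)$: both quadratic forms listed are $\bZ$-equivalent (up to finite index / scaling issues) to the form associated with the adjoint action of $\operatorname{SL}_2$ on its Lie algebra, i.e. the determinant form $-\det\begin{pmatrix} a & b \\ c & -a \end{pmatrix} = a^2 + bc$ on trace-zero $2\times 2$ matrices, which after a change of variables is $xy - z^2$ type. Thus $\operatorname{SO}(Q)^\circ \cong \operatorname{PGL}_2$ and the integer points form a group commensurable with $\operatorname{PSL}_2(\bZ)$, which is virtually free but — crucially — contains a finite-index subgroup generated by unipotents, since $\operatorname{SL}_2(\bZ)$ is generated by the two elementary unipotent matrices $\begin{pmatrix} 1 & 1 \\ 0 & 1 \end{pmatrix}$ and $\begin{pmatrix} 1 & 0 \\ 1 & 1\end{pmatrix}$, whose images under the adjoint (symmetric-square) representation are unipotent in $\operatorname{SO}_{\bZ}(Q)$. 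More directly, I would exhibit by hand the two one-parameter unipotent subgroups of $\operatorname{SO}_{\bZ}(Q)$ coming from the upper- and lower-triangular unipotents of the ambient group, pick their integer generators $u_1, u_2$, and check that $\langle u_1, u_2 \rangle$ already acts irreducibly on $\bR^3$ — which it does, because this subgroup is Zariski dense in $\operatorname{SO}(Q)$ (the two opposite root subgroups generate the connected group). At that point Theorem~\ref{thm: main thm unipotent intro} applies directly to $\Gamma' = \langle u_1, u_2\rangle \leq \Gamma$, giving the uniform Glasner property for $\Gamma'$ and hence a fortiori for $\Gamma$.

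The step I expect to be the main obstacle is the bookkeeping around $\bZ$-equivalence of the quadratic forms and producing genuinely unipotent, genuinely integer, generators of a Zariski-dense subgroup of $\operatorname{SO}_{\bZ}(Q)$ — one must be careful that the change of variables identifying $Q$ with the adjoint form is defined over $\bZ$ (or at least that one lands inside $\operatorname{SO}_{\bZ}(Q)$ and not merely $\operatorname{SO}_{\bQ}(Q)$), and that the exhibited elements are unipotent as elements of $\operatorname{SL}_3(\bZ)$ (all eigenvalues equal to $1$), not merely quasi-unipotent. For $Q = xy - z^2$ this is transparent: the adjoint representation of $\begin{pmatrix} 1 & t \\ 0 & 1 \end{pmatrix}$ on $\operatorname{Sym}^2$ of the standard representation is visibly a single unipotent Jordan block with integer entries when $t \in \bZ$. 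For $Q = x^2 - y^2 - z^2$ one first passes (over $\bZ[1/2]$, or by noting $\operatorname{SO}_{\bZ}(x^2-y^2-z^2)$ and $\operatorname{SO}_{\bZ}(xy-z^2)$ are commensurable via $x = u+v,\ y = u - v$) to the previous case; since Theorem~\ref{thm: main thm unipotent intro} only requires the existence of \emph{some} finite unipotent generating set of \emph{some} finite-index — indeed any Zariski-dense — subgroup, this commensurability is harmless. I would therefore organize the proof as: (1) fix the change of variables and reduce both cases to $\operatorname{SO}_{\bZ}(xy - z^2)$; (2) write down $u_1, u_2$ explicitly as the images of the elementary unipotents of $\operatorname{SL}_2(\bZ)$ under $\operatorname{Sym}^2$, verify they are unipotent elements of $\operatorname{SL}_3(\bZ)$ lying in $\operatorname{SO}_{\bZ}(xy-z^2)$; (3) verify $\langle u_1, u_2\rangle$ is Zariski dense, hence acts irreducibly on $\bR^3$; (4) apply Theorem~\ref{thm: main thm unipotent intro}.
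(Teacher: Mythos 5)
Your overall strategy is exactly the paper's: realize $Q$ as a determinant form on (a lattice in) $\mathfrak{sl}_2$, push the elementary unipotents of $\operatorname{SL}_2(\bZ)$ through the adjoint (equivalently $\operatorname{Sym}^2$) representation to obtain unipotent integer matrices preserving $Q$, observe that the subgroup they generate acts irreducibly on $\bR^3$ (Zariski density of the group generated by two opposite unipotent one-parameter subgroups suffices; the Borel--Harish-Chandra/Borel-density discussion of the full group $\operatorname{SO}_{\bZ}(Q)$ is not needed, since uniform Glasner for a subgroup passes trivially to $\Gamma$), and then apply Theorem~\ref{thm: main thm unipotent intro}. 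For $Q = xy - z^2$ your plan is complete and coincides with the paper's argument.

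The one genuine gap is step (1) of your outline for $Q = x^2 - y^2 - z^2$. The substitution $u = x+y$, $v = x-y$ has determinant $\pm 2$, so the two forms are $\bQ$-equivalent but not $\bZ$-equivalent, and "commensurability is harmless" does not by itself produce what Theorem~\ref{thm: main thm unipotent intro} requires, namely unipotent matrices that lie in $\operatorname{SL}_3(\bZ)$ \emph{and} preserve $x^2-y^2-z^2$; conjugating the unipotents of $\operatorname{SO}_{\bZ}(xy-z^2)$ back by the rational change of basis only lands you in $\operatorname{SO}_{\bQ}(x^2-y^2-z^2)$, which is exactly the worry you flagged but did not resolve. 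Two ways to close it: (a) the paper's route, identifying $(x,y,z)$ with $\begin{pmatrix} z & -(x+y) \\ x-y & -z \end{pmatrix}$, i.e.\ with the index-$2$ sublattice $\{a_{12} \equiv a_{21} \bmod 2\}$ of $\mathfrak{sl}_2(\bZ)$, which is preserved by conjugation by $\begin{pmatrix} 1 & 2 \\ 0 & 1 \end{pmatrix}$ and $\begin{pmatrix} 1 & 0 \\ 2 & 1 \end{pmatrix}$, whose adjoint images are then unipotent elements of $\operatorname{SO}_{\bZ}(x^2-y^2-z^2)$; or (b) take suitable integer powers of your conjugated rational unipotents to clear denominators (a rational unipotent raised to a sufficiently divisible power is integral), noting that the Zariski closure of the group generated by these powers still contains both one-parameter unipotent subgroups, so irreducibility persists. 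Either fix is short, but as written the reduction "to $\operatorname{SO}_{\bZ}(xy-z^2)$" is not justified.
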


\begin{proof}
For $Q(x,y,z) = xy - z^2$ this can be seen as follows. By identifying $(x,y,z) \in \bZ^3$ with $$ \begin{bmatrix}
    z   & -y \\
    x  & -z \\
\end{bmatrix} \in \mathfrak{sl}_2(\bZ)$$ we see that $Q(x,y,z)$ is the determinant. But the determinant is preserved by the conjugation action (adjoint representation) of $\operatorname{SL}_2(\bZ)$ on $\mathfrak{sl}_2(\bR)$ given by $$\operatorname{Ad}(g)A = gAg^{-1} \quad \text{for } A \in \mathfrak{sl}_2(\bR) \text{ and } g \in \operatorname{SL}_2(\bZ),$$ which is irreducible. Note that $\operatorname{Ad}(u)$ is unipotent for unipotent $u$ since $u$ is a polynomial map and group homomorphism, $\operatorname{Ad}(\operatorname{SL}_2(\bZ))$ thus generated by unipotents. Of course, this example generalizes to any higher dimensional adjoint representation, thus showing that it also has the uniform Glasner property. For $Q(x,y,z) = x^2 - y^2 - z^2$ one instead notices

 $$Q(x,y,z)= \operatorname{det} \begin{pmatrix} z & -(x+y) \\ x-y & -z \end{pmatrix}.$$ Hence we may regard $Q$ as the determinant map on the abelian subgroup $$ \left \{ \begin{pmatrix} a_{11} & a_{12} \\ a_{21} & a_{22} \end{pmatrix} \in \mathfrak{sl}_2(\bZ) \text{ }| \text{ } a_{21} \equiv a_{12} \mod 2 \right\} \cong \bZ^3.$$ Now notice that the conjugation action of  $$\left \langle \begin{pmatrix}1 & 2 \\ 0 & 1 \end{pmatrix}, \begin{pmatrix}1 & 0 \\ 2 & 1 \end{pmatrix} \right \rangle,$$ preserves this additive subgroup and acts irreducibly on $\mathfrak{sl}_2(\bR)$. Again, the generators are unipotent hence have unipotent image under the adjoint representation, as required.
\end{proof}

We remark that these examples complement a recent work of Dong \cite{Dong2} where he extended his result from \cite{Dong1} on the Glasner property of $\operatorname{SL}_d(\bZ) \curvearrowright \bT^d$ by showing that the subgroups $\Gamma \leq \operatorname{SL}_d(\bZ)$ that are Zariski dense in $\operatorname{SL}_d(\bR)$ are also Glasner for the action on $\bT^d$, but the uniform Glasner property was not established. The examples above are not Zariski dense in $\operatorname{SL}_d(\bR)$, though it is remarked in Remark 4.2 of \cite{Dong2} that it is possible to also extend his techniques to the case where $\Gamma$ satisfies the Benoist-Quint hypothesis, which these examples do. However, these techniques are not quantitative and do not establish the uniform Glasner property provided in Theorem~\ref{thm: main thm unipotent intro}. It is also worth remarking that our proofs are more self-contained as they avoid the deep work of Benoist-Quint.

\textbf{Acknowledgement:} The authors were partially supported by by the Australian Research Council grant DP210100162.

\section{Hyperplane fleeing orbits implies Glasner property}

In this section we prove Theorem~\ref{thm: main polynomial intro}. We start with the easier direction.

\begin{proof}[Proof of (\ref{glasner conditon main thm}) $\implies$ (\ref{hyperplane fleeing main thm}) in Theorem~\ref{thm: main polynomial intro}] Suppose that we have $v, w \in \bZ^d \setminus\{0\}$ such that $v \cdot (A(x))w = c$ where $c = v \cdot A(0)w$ is a constant. Let $w_m \in \bT^d$ be the image of $\frac{1}{m}w$ and $c_m \in \bT$ be the image of $\frac{1}{m}c$. Notice that $C = \{c_m ~|~ m \in \bZ_{>0} \}$ cannot be dense in $\bT$ because $c_m \to 0 \in \bT$, hence avoids a non-empty open set $U \subset \bT$. The map $f:\bT^d \to \bT$ given by $f(u) = v \cdot u$ is well defined, continuous and surjective with $f(A(n)w_m) = c_m$ for all $n \in \bZ$ and $m \in \bZ_{>0}$. Thus the infinite set $Y = \{w_m ~|~ m \in \bZ_{>0}\} \subset \bT^d$ satisfies the property that $f(A(n)Y)$ will never intersect $U$ and so $A(n)Y$ will never intersect the non-empty open set $f^{-1}(U)$.
\end{proof}

\begin{lemma} \label{lemma: basic hyperplane-fleeing} If $A(x) \in M_{d \times d}(\bZ[x])$  is a matrix of integer polynomials then the following are equivalent.

\begin{enumerate} 

\item The orbit $\{A(n)w ~|~ n \in \bZ\}$ is hyperplane fleeing for all  $w \in \bZ^d\setminus\{0\}$.
\item For all $w \in \bZ^d\setminus\{0\}$, the entries of $(A(x) - A(0))w$ are polynomials in $\bZ[x]$ that are linearly independent over $\bZ$.
\item The polynomial $v^t (A(x) - A(0))w$ is non-zero for all $v,w \in \bZ^d\setminus\{0\}$.
\item For all $v \in \bZ^d\setminus\{0\}$, the entries of $v^t (A(x) - A(0))$ are polynomials in $\bZ[x]$ that are linearly independent over $\bZ$. 
\end{enumerate}

\end{lemma}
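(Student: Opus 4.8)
The plan is to translate the geometric ``hyperplane-fleeing'' condition into the vanishing of a one-variable polynomial, after which all four conditions become immediate restatements of one another. First I would record two routine reductions. (i) A nonempty set $S \subset \bR^d$ lies in some proper affine subspace if and only if its affine span is not all of $\bR^d$; fixing any $s_0 \in S$, the affine span equals $s_0 + \operatorname{span}\{s - s_0 ~|~ s \in S\}$, so $S$ is \emph{not} hyperplane-fleeing iff there is $v \in \bR^d \setminus \{0\}$ with $v \cdot (s - s_0) = 0$ for all $s \in S$. (ii) If moreover $S \subset \bZ^d$, then such a $v$ may be chosen in $\bZ^d \setminus \{0\}$, since a real linear relation among rational vectors can be taken rational and then scaled to be integral. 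Applying this with $S = \{A(n)w ~|~ n \in \bZ\}$ and $s_0 = A(0)w$, the orbit of $w$ is not hyperplane-fleeing precisely when there is $v \in \bZ^d \setminus \{0\}$ with $v^t(A(n) - A(0))w = 0$ for all $n \in \bZ$.

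The key point is then that $n \mapsto v^t(A(n) - A(0))w$ is the evaluation at $n$ of the polynomial $p_{v,w}(x) := v^t(A(x) - A(0))w \in \bZ[x]$, and a one-variable polynomial vanishing at every integer is identically zero. Hence the orbit of a fixed $w \neq 0$ is not hyperplane-fleeing iff $p_{v,w}(x) = 0$ for some $v \in \bZ^d \setminus \{0\}$; quantifying over all $w \in \bZ^d \setminus \{0\}$ gives $(1) \Leftrightarrow (3)$ at once. For $(2) \Leftrightarrow (3)$, fix $w \neq 0$ and write $(A(x) - A(0))w = (q_1(x), \ldots, q_d(x))^t$ with $q_i \in \bZ[x]$; a nontrivial $\bZ$-linear dependence $\sum_i v_i q_i(x) = 0$ is exactly a vector $v \in \bZ^d \setminus \{0\}$ with $p_{v,w}(x) = 0$, so the statement ``the entries of $(A(x)-A(0))w$ are $\bZ$-linearly independent for every $w \neq 0$'' is literally the negation of the existence statement in $(3)$. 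Finally, the negation of $(3)$ is unchanged if we replace $A(x)$ by $A(x)^t$ and swap the names of $v$ and $w$ (because $v^t(A(x)-A(0))w = w^t(A(x)^t - A(0)^t)v$), so $(3)$ holds for $A(x)$ iff it holds for $A(x)^t$; since $(4)$ for $A(x)$ is exactly $(2)$ for $A(x)^t$, the equivalence $(2) \Leftrightarrow (3)$ applied to $A(x)^t$ yields $(4) \Leftrightarrow (3)$.

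There is no genuinely hard step here; the only things one must be careful about are the passages between $\bR$-, $\bQ$- and $\bZ$-coefficients (for the covector defining a hyperplane in reduction (ii), and implicitly in the notion of $\bZ$-linear independence of integer polynomials), the observation that a proper affine subspace is always contained in an affine hyperplane so that the ``fleeing'' condition is insensitive to this distinction, and the elementary fact that a polynomial in $\bZ[x]$ vanishing on all of $\bZ$ is the zero polynomial.
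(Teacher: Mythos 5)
Your proof is correct. The paper in fact states this lemma without proof (it is treated as routine), and your argument is exactly the intended one: reduce hyperplane-fleeing of $\{A(n)w\}$ to the vanishing of the single polynomial $p_{v,w}(x)=v^t(A(x)-A(0))w$, using that a real covector annihilating a set of integer vectors may be taken in $\bZ^d$ and that a polynomial vanishing on all of $\bZ$ is zero — the same rationality observation the authors make parenthetically in the introduction — with the transpose symmetry giving $(4)\Leftrightarrow(3)$.
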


If $w = (w_1, \ldots, w_d) \in \bZ^d$ we let $\text{gcd}(w) = \text{gcd}(w_1, \ldots, w_d) $. If $\vec{w}_1, \ldots, \vec{w}_d$ are integer vectors (of possibly different dimensions) then we identify $(\vec{w_1}, \ldots, \vec{w_d})$ with their concatenation, so $\text{gcd}(\vec{w}_1, \ldots, \vec{w}_d)$ makes sense and is equal to $\text{gcd}(\text{gcd}(\vec{w}_1), \ldots, \text{gcd}(\vec{w}_d))$.

\begin{prop} \label{prop: divisor estimate} Let $v_1, \ldots, v_d \in \bZ^r$ be linearly independent vectors. Then for all $a_1, \ldots a_d \in \bZ$ and $q \in \bZ_{>0}$ with $\text{gcd}(a_1, \ldots, a_d, q) = 1$ we have that $$\text{gcd} (a_1v_1 + \cdots + a_dv_d, q) \leq d! \max_i  \| v_i\|^d_{\infty}.$$

\end{prop}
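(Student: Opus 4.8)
The plan is to extract a nonzero $d \times d$ minor from the matrix whose columns are $v_1, \dots, v_d$ and then exploit Cramer's rule. Set $w = a_1 v_1 + \cdots + a_d v_d \in \bZ^r$ and let $g = \gcd(w, q) \in \bZ_{>0}$; this is the quantity to be bounded. Since $v_1, \dots, v_d$ are linearly independent, the $r \times d$ integer matrix $V = [\,v_1 \mid \cdots \mid v_d\,]$ has rank $d$, so we may choose $d$ of its rows whose restriction is an invertible $d \times d$ matrix $M$. Each entry of $M$ is a coordinate of one of the $v_i$, so expanding the determinant by the Leibniz formula over $S_d$ gives the bound
\[
0 \neq |\det M| \;\le\; d!\,\bigl(\max_{i,j}|M_{ij}|\bigr)^d \;\le\; d!\,\max_i \|v_i\|_\infty^d .
\]
Thus it suffices to prove that $g \mid \det M$.

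To see this, write $a = (a_1, \dots, a_d)^t$, so that $Va = w$ and, restricting to the chosen rows, $Ma = w'$, where $w'$ is the corresponding subvector of $w$. Since $g$ divides every coordinate of $w$, it divides every coordinate of $w'$, i.e.\ $Ma \equiv 0 \pmod{g}$. Multiplying by the adjugate matrix yields $(\det M)\,a = \operatorname{adj}(M)\,Ma \equiv 0 \pmod{g}$, so $g \mid (\det M)\,a_i$ for every $i$. Combining this with $g \mid q$ (hence $g \mid (\det M)\,q$) and using that $q>0$ guarantees the relevant gcds are well defined, we get
\[
g \;\Big|\; \gcd\bigl((\det M)a_1, \dots, (\det M)a_d, (\det M)q\bigr) \;=\; |\det M|\cdot \gcd(a_1,\dots,a_d,q) \;=\; |\det M| ,
\]
where the last equality uses the coprimality hypothesis. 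Therefore $g \le |\det M| \le d!\,\max_i \|v_i\|_\infty^d$, as claimed. (Note the degenerate case $w = 0$, which forces all $a_i = 0$ and hence $q = 1$ and $g = 1$, is subsumed by this argument since the pull-out identity for gcd holds regardless of whether the $a_i$ vanish.)

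The argument is essentially routine; the only genuine ideas are the choice of a nonzero $d\times d$ submatrix (so that Cramer's rule applies with a controlled, nonzero determinant) and the observation that the coprimality of $(a_1,\dots,a_d,q)$ lets one cancel $\det M$ out of the resulting gcd. The numerical bound on $|\det M|$ itself is immediate from the Leibniz expansion, so no careful computation is needed there.
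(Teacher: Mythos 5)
Your proof is correct. The opening move is the same as the paper's: both arguments select $d$ rows of the matrix $[v_1 \mid \cdots \mid v_d]$ forming an invertible $d \times d$ submatrix and bound its determinant by $d!\,\max_i \|v_i\|_{\infty}^d$ via the Leibniz expansion, and both note that passing to the subvector can only help (coordinatewise divisibility is preserved). Where you diverge is the core divisibility step: the paper invokes the Smith Normal Form $V = LDR$, uses that the automorphisms $L, R$ preserve gcds to reduce to the diagonal matrix $D$, and bounds $\gcd(DR\vec a, q)$ by the last elementary divisor $D_{d,d} \leq |\det V|$; you instead multiply the congruence $Ma \equiv 0 \pmod{g}$ by $\operatorname{adj}(M)$ to get $g \mid (\det M)a_i$ for all $i$, and then use $g \mid q$ together with the pull-out identity $\gcd((\det M)a_1, \ldots, (\det M)a_d, (\det M)q) = |\det M|\gcd(a_1, \ldots, a_d, q) = |\det M|$. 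Your route is more elementary (no structure theory of integer matrices needed) and yields the slightly sharper conclusion that $\gcd(a_1v_1 + \cdots + a_dv_d, q)$ actually \emph{divides} $\det M$, whereas the paper's SNF argument in principle gives the finer bound $D_{d,d}$, the largest elementary divisor of the chosen minor, which can be smaller than $|\det M|$; both refinements are immaterial for the final estimate, and your handling of the degenerate case $w = 0$ is also fine.
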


\begin{proof} Let $V_0:\bZ^d \to \bZ^r$ be the linear map given by $$V_0(x_1, \ldots, x_d) = \sum_{i=1}^d x_i v_i.$$ It is of full rank hence there exists a full rank $d \times d$ minor of the matrix $V_0$, in other words there is a projection $\pi: \bZ^r \to \bZ^d$ of co-ordinates so that $V = \pi \circ V_0: \bZ^d \to \bZ^d$ is of full rank. By the Smith-Normal-Form for integer matrices, there exists a linear map $D: \bZ^d \to \bZ^d$ and automorphisms $R$ and $L$ of $\bZ^d$ such that $$V = LDR$$ and $D$ is a diagonal matrix with non-zero (the kernel of $V$ and hence $D$ is trivial) diagonal entries satisfying the divisibility condition $D_{1,1} | D_{2,2} | \cdots | D_{d,d}$. Since automorphisms preserve divisors, we have that for $\vec{a} = (a_1, \ldots, a_d)$ with $\text{gcd}(a_1, \ldots, a_d, q) = 1$ that $\text{gcd}(R\vec{a}, q) = 1$. Hence since all $D_{i,i} \neq 0$, we get that $\text{gcd}(D R\vec{a}, q) \leq D_{d,d}.$ Since $L$ preserves divisors, we get that $\text{gcd}(V \vec{a}, q) = \text{gcd}(LDR\vec{a},q) \leq D_{d,d}$. We have the upper bound $$D_{d,d} \leq |\operatorname{det}(D)| = |\operatorname{det}(V)| \leq d! \max_i  \| v_i\|_{\infty}^d.$$ Finally, since $V = \pi \circ V_0$ we have $\operatorname{gcd}(V_0\vec{a}, q) \leq \operatorname{gcd}(V \vec{a}, q)$, which completes the proof.
\end{proof}

\begin{mydef} We say that a vector $P(x) = (P_1(x), \ldots, P_r(x))$, where $P_i(x) \in \bZ[x]$, has \textit{multiplicative complexity} $Q$ if for all $\vec{a} = (a_1, \ldots, a_r) \in \bZ^r$ and $q \in \bZ$ with $\text{gcd}(a_1, \ldots, a_r, q) = 1$ we have that the polynomial $$\sum_{j=1}^D b_j x^j = (P(x) - P(0)) \cdot \vec{a}$$ satisfies $\text{gcd}(b_1, \ldots, b_D, q) \leq Q$.

\end{mydef}

Throughout this paper, if $A(x) \in M_{d \times d}(\bZ[x])$ is a matrix with polynomial integer matrices, then we let $\| A(x) \|$ denote the largest absolute value of a coefficient appearing in $A(x)$.

\begin{corol}\label{corol: hyperplane fleeing implies bounded multiplicative complexity} Let $A(x) \in M_{d \times d}(\bZ[x])$ be a matrix with integer polynomial entries and $w \in \bZ^d \setminus \{0\}$ such that the entries of the row vector $w^t (A(x) - A(0))$ are elements of $\bZ[x]$ that are linearly independent over $\bZ$. Then $w^tA(x)$ has multiplicative complexity $Q$ where $$Q = Q(A(x), w) = d! \cdot \left( d \cdot \|A(x) - A(0)\| \|w \|_{\infty} \right)^d.$$

\end{corol}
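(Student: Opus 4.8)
The plan is to simply combine the definition of multiplicative complexity with the divisor estimate of Proposition~\ref{prop: divisor estimate}, applied to the coefficient vectors of the entries of $w^t(A(x) - A(0))$. First, write $w^t(A(x) - A(0)) = (P_1(x), \ldots, P_d(x))$ where each $P_j(x) \in \bZ[x]$ has zero constant term. By hypothesis these polynomials are linearly independent over $\bZ$. Suppose $D$ is large enough that every $P_j$ has degree at most $D$; then we may encode each $P_j$ by its coefficient vector $v_j = (v_{j,1}, \ldots, v_{j,D}) \in \bZ^D$ (dropping the constant term, which is $0$). Linear independence of the polynomials $P_j$ over $\bZ$ is exactly linear independence of the vectors $v_1, \ldots, v_d \in \bZ^D$, so Proposition~\ref{prop: divisor estimate} applies with $r = D$.

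Next, fix $\vec a = (a_1, \ldots, a_d) \in \bZ^d$ and $q \in \bZ_{>0}$ with $\gcd(a_1, \ldots, a_d, q) = 1$. The polynomial $(w^tA(x) - w^tA(0))\cdot \vec a = \sum_{j=1}^d a_j P_j(x)$ has coefficient vector (in degrees $1$ through $D$) equal to $a_1 v_1 + \cdots + a_d v_d$. Proposition~\ref{prop: divisor estimate} then gives
\[
\gcd\bigl(a_1 v_1 + \cdots + a_d v_d,\ q\bigr) \leq d!\,\max_j \|v_j\|_\infty^d.
\]
It remains only to bound $\max_j \|v_j\|_\infty$. Each coefficient of $P_j$ is $\sum_{i=1}^d w_i (A(x) - A(0))_{ij}$ evaluated coefficientwise, so $\|v_j\|_\infty \leq d\,\|w\|_\infty\,\|A(x) - A(0)\|$. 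Substituting, $\gcd \leq d!\,(d\,\|A(x)-A(0)\|\,\|w\|_\infty)^d = Q$, which is precisely the definition of $w^tA(x)$ having multiplicative complexity $Q$.

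There is essentially no obstacle here: the statement is a bookkeeping corollary of Proposition~\ref{prop: divisor estimate}, and the only mild subtlety is making sure the coefficient-vector dimension $r = D$ is chosen uniformly (take $D$ to be the maximum degree among the $P_j$, or any common upper bound; the estimate does not depend on $D$). One should also note the harmless edge case where some $P_j$ is identically zero — but this cannot happen, since the $P_j$ are linearly independent over $\bZ$, hence in particular nonzero. The only place care is needed is the triangle-inequality bound $\|v_j\|_\infty \le d\|w\|_\infty\|A(x)-A(0)\|$, where the factor $d$ comes from the sum over the $d$ rows.
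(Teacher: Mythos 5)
Your argument is correct and is essentially the paper's own proof: identify the entries of $w^t(A(x)-A(0))$ with their (linearly independent) coefficient vectors, apply Proposition~\ref{prop: divisor estimate} to the combination $\sum_j a_j v_j$, and bound $\max_j \|v_j\|_\infty \le d\,\|A(x)-A(0)\|\,\|w\|_\infty$. You have merely written out the bookkeeping that the paper leaves implicit, so there is nothing to add.
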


\begin{proof} Let $v_1, \ldots, v_d \in \bZ[x]$ denote the entries of the row vector $w^t(A(x) - A(0))$. These are linearly independent over $\bZ$ and so we may apply Proposition~\ref{prop: divisor estimate} by viewing $v_i$ as an element of $\bZ^r$, where $r - 1$ is the maximal degree of the $v_i$, to obtain the desired estimate. \end{proof}

Throughout this paper, we let $e(t) = \exp(2\pi i t)$. We will need the following classical bound of Hua.

\begin{thm}[\cite{Hua}, see also \cite{HuaBook}] \label{thm: Hua bound} For $\delta > 0$ and positive integers $D$ there exists a constant $C_{D, \delta}$ such that if $f = a_0 + a_1 x + \cdots + a_D x^D \in \bZ[x]$ is a polynomial and $q$ is a positive integer such that $gcd(a_1, \ldots, a_D ,q) = 1$ then $$\left| \frac{1}{q} \sum_{n=1}^q e \left(\frac{f(n)}{q} \right) \right| \leq C_{D, \delta} q^{\delta - \frac{1}{D}}. $$
\end{thm}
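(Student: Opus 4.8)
The plan is to prove this via the classical two-step treatment of \emph{complete} exponential sums. Write $S(q;f) = \sum_{n=1}^{q} e\!\left(f(n)/q\right)$; since replacing $f$ by $f - a_0$ only multiplies $S(q;f)$ by a unimodular constant, it suffices to bound $|S(q;f)|$ assuming $\gcd(a_1,\dots,a_D,q)=1$. The first step is multiplicativity: if $q = q_1 q_2$ with $\gcd(q_1,q_2)=1$, then the Chinese Remainder Theorem parametrization $n \mapsto (n \bmod q_1, n \bmod q_2)$ together with the identity $\tfrac1q \equiv \overline{q_2}/q_1 + \overline{q_1}/q_2 \pmod 1$ (where $\overline{q_2} q_2 \equiv 1 \pmod{q_1}$, and symmetrically) shows that $S(q;f) = S(q_1;f_1)\,S(q_2;f_2)$, where each $f_i$ is $f$ multiplied by a fixed integer coprime to $q_i$. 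In particular the non-constant coefficients of $f_i$ are unit multiples of $a_1,\dots,a_D$ modulo $q_i$, so the coprimality hypothesis is inherited, and it is enough to prove a bound of the shape $|S(p^k;f)| \le C_D\, p^{k(1-1/D)}$ for prime powers $p^k$ with $p \nmid \gcd(a_1,\dots,a_D)$. Multiplying these over $q = \prod_{p} p^{k_p}$ yields $|S(q;f)| \le C_D^{\omega(q)}\, q^{1-1/D}$ where $\omega(q)$ is the number of distinct prime divisors; since $\omega(q) \ll \log q/\log\log q$ we have $C_D^{\omega(q)} = q^{o(1)} \le C_{D,\delta}\, q^{\delta}$ for $q$ large, and the finitely many small $q$ are absorbed into the constant.

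For the prime-power bound, the case $k=1$ is a sum over $\bF_p$: the reduction $\bar f$ of $f$ modulo $p$ is non-constant (precisely because not all of $a_1,\dots,a_D$ are divisible by $p$) of some degree $D' \le D$, so Weil's bound for one-variable exponential sums — or the more elementary estimate of Mordell, which gives the exponent $1-1/D$ directly — yields $|S(p;f)| \le (D-1)\,p^{1/2} \le (D-1)\,p^{1-1/D}$ when $D \ge 2$; the case $D = 1$ is trivial since then $S(p;f) = 0$. For $k \ge 2$ the argument is $p$-adic stationary phase: writing $n = a + p^{k-1}b$ with $a$ modulo $p^{k-1}$ and $b$ modulo $p$, one has $f(n) \equiv f(a) + p^{k-1}b\,f'(a) \pmod{p^k}$, so the inner sum over $b$ is a geometric series vanishing unless $p \mid f'(a)$, giving
\[
S(p^k;f) \;=\; p \sum_{\substack{a \bmod p^{k-1}\\ p \mid f'(a)}} e\!\left(\frac{f(a)}{p^k}\right).
\]
Iterating this condensation — Taylor-expanding $f$ around each zero of $f'$, peeling a factor $p$ off the modulus at each stage while restricting the surviving variable to the (at most $\deg f' \le D-1$, by Lagrange's theorem on roots modulo $p$) critical points of the phase, and terminating at a genuine $\bF_p$-sum estimated again by Weil/Mordell — produces the bound $|S(p^k;f)| \le C_D\,p^{k(1-1/D)}$, the exponent $1-1/D$ emerging as the balance between the gain of $p$ per step and the growth of the critical locus.

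I expect the uniform prime-power descent for $k \ge 2$ to be the main obstacle. The bookkeeping is delicate at two points. First, for small primes $p \le D$ the derivative $f'$ can vanish identically modulo $p$ (e.g.\ $f(x)=x^p$), so ``restrict to the zeros of $f'$'' is vacuous and one must instead descend by tracking $p$-adic valuations of successive derivatives; this case contributes the dependence of $C_D$ on $D$ through the finitely many primes below $D$. Second, one must check that every estimate in the descent is uniform in the coefficients $a_i$ — only the degree $D$ may enter the constants, never the heights of the $a_i$ — which is what ultimately makes $C_{D,\delta}$ depend on $D$ and $\delta$ alone. The remaining ingredients, namely the Chinese Remainder factorization and the reassembly over $\omega(q)$ prime factors, are routine.
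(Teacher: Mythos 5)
A point of context first: the paper does not prove this statement — Theorem~\ref{thm: Hua bound} is imported as a black box from Hua's 1940 paper (see also his book), so a citation is all that is expected at this point, and there is no internal argument to compare yours against. Your outer layers are the standard classical route and are correct as far as they go: the CRT factorization $S(q;f)=S(q_1;\overline{q_2}f)\,S(q_2;\overline{q_1}f)$ does inherit the coprimality hypothesis, the reassembly $C_D^{\omega(q)}\le C_{D,\delta}q^{\delta}$ is routine, the single stationary-phase identity $S(p^k;f)=p\sum_{p\mid f'(a)}e\bigl(f(a)/p^k\bigr)$ is valid for $k\ge 2$, and the $k=1$ case is essentially fine (with the small caveat that Weil's bound as you quote it fails for Artin--Schreier-type reductions such as $x^2-x$ mod $2$, which can only occur for $p\le D$ and are rescued by the trivial bound $p\le C_Dp^{1-1/D}$).

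The genuine gap is the uniform prime-power bound for $k\ge 2$, which is the entire substance of Hua's lemma and which you assert rather than prove; moreover the iteration as you describe it does not close. Recentring at a critical residue $r$ (write $a=r+pm$, so that $f(r+pm)=f(r)+p^2g_r(m)$ with $g_r\in\bZ[m]$) converts $S(p^k;f)$ into at most $D-1$ terms of the form $p\,e\bigl(f(r)/p^k\bigr)S(p^{k-2};g_r)$; feeding the target bound $C_Dp^{\ell(1-1/D)}$ back into itself costs a factor $(D-1)p^{2/D-1}$ per step, which exceeds $1$ for $D\ge 3$ and small $p$ (for $p=2$, $D=3$ the constant compounds like $2^{k/3}$ over the $\sim k/2$ steps and the final bound degenerates to the trivial $p^k$), and a $k$-dependent blow-up cannot be absorbed into $C_D$. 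In addition, the recentred polynomial $g_r$ need not satisfy the coprimality hypothesis (all its non-constant coefficients can be divisible by $p$ when $r$ is a multiple critical point), so the induction hypothesis does not even apply before a further renormalization by an exact power $p^{\sigma_r}$. The proofs in the literature close the argument by a multiplicity-sensitive induction: critical points are counted with multiplicity, Hensel lifting conserves (or decreases) total multiplicity so that branching is charged against $\sum_r\nu_r\le D-1$, a critical point of multiplicity $\nu_r$ is assigned the better local exponent $1-1/(\nu_r+1)$, and the renormalized polynomial is put back into the standard form before recursing; primes $p\le D$, where $f'$ may vanish identically mod $p$, need a separate treatment. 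You correctly flag these as the delicate points, but they are the proof rather than bookkeeping around it; as written, the proposal is a correct strategy whose decisive step is missing — which is harmless for the paper, since the result is used here purely as a cited classical input.
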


We now state some extensions of tools developed by Alon-Peres \cite{Alon-Peres} that have been used or slightly modified in subsequent works on the Glasner property \cite{Dong1}, \cite{Kelly-Le}. Let $$B(M) = \{ \vec{m} \in \bZ^d ~|~ \vec{m} \neq \vec{0} \text{ and } \|\vec{m}\|_{\infty} \leq M \}$$ denote the $L^{\infty}$ ball of radius $M$ in $\bZ^d$ around $\vec{0}$ with $\vec{0}$ removed.

\begin{prop} \label{prop: fourier estimate for k^2} For each positive integer $d$ there exists a constant $C_1 = C_1(d) >0$ such that for all $\epsilon >0 $ if we set $M = \lfloor d/\epsilon \rfloor$ then the following is true: Let $\gamma_1, \ldots, \gamma_N \subset \text{M}_{d \times d} (\bZ)$ be a finite sequence of matrices and $X = \{x_1, \ldots, x_k\} \subset \bT^d$. Suppose that $\gamma_n X$ is not $\epsilon$-dense in $\bT^d$ for all $n=1,\ldots N$. Then 

$$k^2 \leq \frac{C_1}{\epsilon^d}\sum_{ \vec{m} \in B(M)} \sum_{1 \leq i,j \leq k} \frac{1}{N} \sum_{n=1}^N e(\vec{m} \cdot \gamma_n (x_i - x_j))  $$ \end{prop}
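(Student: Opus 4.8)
The plan is to follow the Alon--Peres Fourier-analytic counting scheme. Fix $\epsilon > 0$ and set $M = \lfloor d/\epsilon \rfloor$, so that $1/M < \epsilon/d$ and hence any subset of $\bT^d$ that meets every cube of a grid of side $1/M$ is $\epsilon$-dense (because such a grid has mesh at most $\epsilon/d \cdot \sqrt d < \epsilon$ in the sup-to-Euclidean comparison, or more simply one works with the sup metric throughout and notes $\sqrt d / M \le \epsilon$). Partition $\bT^d$ into $M^d$ half-open cubes $Q_1, \dots, Q_{M^d}$ of side $1/M$. Since $\gamma_n X$ is not $\epsilon$-dense, it misses at least one such cube; choose one, call it $Q_{j(n)}$, and let $\psi_n = \mathds{1}_{Q_{j(n)}}$ be its indicator function, viewed as a function on $\bT^d$. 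Let $\hat\psi_n(\vec m) = \int_{\bT^d} \psi_n(t) e(-\vec m \cdot t)\, dt$ denote its Fourier coefficients. Because $Q_{j(n)}$ is a translate of $[0,1/M)^d$, we have $\hat\psi_n(\vec 0) = M^{-d}$ and, crucially, the estimate $\sum_{\vec m \in \bZ^d} |\hat\psi_n(\vec m)| \le C_0(d)$ for a dimensional constant $C_0$ — this is the standard computation that the Fourier coefficients of the indicator of a box of side $1/M$ are, up to translation phases, a product of one-dimensional Dirichlet-type kernels whose $\ell^1$ norms are $O(\log M)$ each, so one actually gets $\sum |\hat\psi_n(\vec m)| \le C_0 (\log M)^d$; to get a clean bound one instead replaces the sharp indicator by a smoothed bump supported in a cube of side $\le 1/M$ with $L^1$-normalized total mass $\gtrsim M^{-d}$ and $\sum |\hat\psi_n(\vec m)| \le C_0$, which still misses the cube, and this is the version I would use.

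Next I would exploit that $\psi_n$ vanishes on $\gamma_n X$: for every $i$, $\psi_n(\gamma_n x_i) = 0$. Expanding $\psi_n$ in its Fourier series $\psi_n(t) = \sum_{\vec m} \hat\psi_n(\vec m) e(\vec m \cdot t)$ and summing over $i = 1, \dots, k$ gives
$$
0 = \sum_{i=1}^k \psi_n(\gamma_n x_i) = \sum_{\vec m \in \bZ^d} \hat\psi_n(\vec m) \sum_{i=1}^k e(\vec m \cdot \gamma_n x_i) = k \hat\psi_n(\vec 0) + \sum_{\vec m \neq \vec 0} \hat\psi_n(\vec m)\, S_n(\vec m),
$$
where $S_n(\vec m) = \sum_{i=1}^k e(\vec m \cdot \gamma_n x_i)$ (here I use that $\gamma_n^t \vec m \in \bZ^d$ so the characters descend to $\bT^d$; strictly one writes $e(\vec m \cdot \gamma_n x_i) = e((\gamma_n^t \vec m) \cdot x_i)$, and the reindexing $\vec m \mapsto \gamma_n^t \vec m$ is handled when we pass to $|S_n(\vec m)|^2$ below). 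Since $\hat\psi_n(\vec 0) = M^{-d} \ge (c\epsilon)^d$ for a constant $c = c(d)$, rearranging and applying the triangle inequality yields
$$
k (c\epsilon)^d \le k M^{-d} = \Bigl| \sum_{\vec m \neq \vec 0} \hat\psi_n(\vec m) S_n(\vec m) \Bigr| \le \Bigl( \sum_{\vec m \neq \vec 0} |\hat\psi_n(\vec m)| \Bigr)^{1/2} \Bigl( \sum_{\vec m \neq \vec 0} |\hat\psi_n(\vec m)|\, |S_n(\vec m)|^2 \Bigr)^{1/2},
$$
by Cauchy--Schwarz with weights $|\hat\psi_n(\vec m)|$. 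Using $\sum_{\vec m} |\hat\psi_n(\vec m)| \le C_0$, squaring, and then averaging over $n = 1, \dots, N$ gives
$$
k^2 (c\epsilon)^{2d} \le \frac{C_0}{N} \sum_{n=1}^N \sum_{\vec m \neq \vec 0} |\hat\psi_n(\vec m)|\, |S_n(\vec m)|^2.
$$

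Finally I would localize the $\vec m$-sum to $B(M)$ and remove the weights. The point is that the high-frequency tail is negligible because the smoothed bump $\psi_n$ can be chosen with rapidly decaying Fourier coefficients, or — following the actual Alon--Peres argument — one chooses $\psi_n$ to be the indicator of the cube convolved with itself so that $|\hat\psi_n(\vec m)|$ is bounded by a product of Fejér kernels, which are nonnegative and concentrated on $\|\vec m\|_\infty \lesssim M$; truncating at $B(M)$ costs only a constant factor in the above inequality. On $B(M)$ we bound $|\hat\psi_n(\vec m)| \le C_0$ crudely and expand $|S_n(\vec m)|^2 = \sum_{1 \le i,j \le k} e(\vec m \cdot \gamma_n(x_i - x_j))$, arriving at
$$
k^2 \le \frac{C_1}{\epsilon^d} \sum_{\vec m \in B(M)} \sum_{1 \le i,j \le k} \frac{1}{N} \sum_{n=1}^N e(\vec m \cdot \gamma_n(x_i - x_j))
$$
after absorbing $(c)^{-2d}$, $C_0$ and the truncation constant into $C_1 = C_1(d)$; note that the inner double sum over $i,j$ is real and nonnegative (it equals $\sum_n |S_n(\vec m)|^2 / N \ge 0$ once summed appropriately), which is why dropping the absolute values is legitimate. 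The main obstacle is the bookkeeping around the choice of test function: one must pick $\psi_n$ so that simultaneously (a) it witnesses non-$\epsilon$-density (supported in a small cube avoiding $\gamma_n X$), (b) $\hat\psi_n(\vec 0) \gtrsim \epsilon^d$, (c) $\sum_{\vec m} |\hat\psi_n(\vec m)| = O_d(1)$ with no logarithmic loss, and (d) the mass outside $B(M)$ is controllable — the self-convolved indicator (Fejér kernel) does all four, and verifying its properties is the one genuinely computational point. Everything else is Cauchy--Schwarz and averaging.
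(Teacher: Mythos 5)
Your overall scheme (a test function that vanishes on $\gamma_n X$, Fourier expansion, Cauchy--Schwarz with weights $|\hat{\psi}_n(\vec m)|$, averaging over $n$, and the observation that $\sum_{i,j} e(\vec m \cdot \gamma_n(x_i-x_j)) = |S_n(\vec m)|^2 \geq 0$) is the right general philosophy, and everything up to the averaged inequality $k^2 \hat{\psi}_n(\vec 0)^2 \leq C_0 \sum_{\vec m \neq \vec 0} |\hat{\psi}_n(\vec m)|\,|S_n(\vec m)|^2$ is correct. Note, however, that the paper does not reprove this proposition at all: its proof is a citation of Proposition 2 of Kelly--L\^{e}, whose half-page argument rests on the several-variable Selberg-type inequality of Barton--Montgomery--Vaaler, i.e. on a \emph{band-limited} detector of the missed cube. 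Your attempt to replace that input by a compactly supported bump is where the argument breaks.

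The gap is the truncation step, which you describe as costing ``only a constant factor.'' After Cauchy--Schwarz you must convert $\sum_{\vec m \neq \vec 0} |\hat{\psi}_n(\vec m)|\,|S_n(\vec m)|^2$ into a sum over $B(M)$, so you need the tail $\sum_{\|\vec m\|_\infty > M} |\hat{\psi}_n(\vec m)|\,|S_n(\vec m)|^2$ to be either absorbable into the head or at most a small multiple of $k^2 \hat{\psi}_n(\vec 0)^2 \asymp k^2 M^{-2d}$. Off $B(M)$ the only available bound is $|S_n(\vec m)| \leq k$, so you would need $\sum_{\|\vec m\|_\infty > M} |\hat{\psi}_n(\vec m)| \lesssim M^{-2d}$. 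No function supported in a cube of side $1/M$ comes close: for the self-convolved indicator the $\ell^1$ Fourier mass outside $B(M)$ is a constant \emph{fraction} of the total (the coefficients $\prod_j |\widehat{\mathbf 1}(m_j)|^2$ decay only like $1/m_j^2$ per coordinate), and for a smooth bump $\varphi(Mx)$ the tail is $\approx \int_{\|\xi\|_\infty>1}|\hat{\varphi}(\xi)|\,d\xi$, a fixed constant independent of $M$ --- enormously larger than $M^{-2d}$. By uncertainty-principle considerations you cannot push the effective frequency support of a bump at spatial scale $1/M$ below roughly $M\log M$ while making the relative tail as small as $M^{-2d}$, so at best your argument proves the inequality with $B(M)$ replaced by a larger box $B(M^{1+\delta})$ (or with extra losses), not the proposition as stated; and the alternative hope that the tail-weighted $|S_n(\vec m)|^2$ is dominated by the head is not justified, since high-frequency exponential sums are not controlled by low-frequency ones. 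The standard repair is exactly the cited Barton--Montgomery--Vaaler construction: a minorant of the cube indicator that is a trigonometric polynomial with frequencies \emph{exactly} in $B(M)$, so that no tail arises; one then gets $k \lesssim_d \sum_{\vec 0 \neq \vec m \in B(M)} w_{\vec m} |S_n(\vec m)|$ with $\sum_{\vec m \in B(M)} w_{\vec m}^2 \lesssim_d \epsilon^{d}\cdot \epsilon^{-2d}$-type bookkeeping, and Cauchy--Schwarz plus averaging over $n$ gives the stated bound. If you want a self-contained proof, you should prove (or quote) that band-limited inequality rather than smooth a cutoff.
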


\begin{proof} This is exactly Proposition 2 in \cite{Kelly-Le} without the limit. See the short half-page proof that uses the exponential sum estimate from \cite{Barton-Montgomery-Hugh-Valeer}. \end{proof}

\begin{prop} \label{prop: h_q sum bound} Fix an integer $d>0$ and any real number $r>0$. Then there exists a constant $C = C(d,r)$ such that the following is true: Given any distinct $x_1, \ldots, x_k \in \bT^d$ let $h_q$ denote the number of pairs $(i,j)$ with $1 \leq i,j \leq k$ such that $q$ is the minimal (if such exists) positive integer such that $q(x_i - x_j) = 0$. Then $$\sum_{q=2}^{\infty} h_q q^{-r} \leq C k^{2 - r/(d+1)}.$$

\end{prop}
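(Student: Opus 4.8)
The plan is to reduce the estimate to two elementary counting facts about torsion points of $\bT^d$ and then to balance them by truncating the sum at a threshold $Q \asymp k^{1/(d+1)}$. The two facts are: \emph{(i)} the pointwise bound $h_q \le k q^d$ for every $q \ge 2$; and \emph{(ii)} the crude global bound $\sum_{q \ge 2} h_q \le k^2$. For \emph{(i)}: for a fixed index $i$, the indices $j$ with $q(x_i - x_j)=0$ are exactly those with $x_j \in x_i + \tfrac1q\bZ^d/\bZ^d$, and since the $x_j$ are distinct this set has at most $|\tfrac1q\bZ^d/\bZ^d| = q^d$ elements; summing over the $k$ choices of $i$ bounds the number of ordered pairs $(i,j)$ with $q(x_i-x_j)=0$, hence also $h_q$, by $kq^d$. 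For \emph{(ii)}: $\sum_{q\ge 2} h_q$ is precisely the number of ordered pairs $(i,j)$ with $i\ne j$ for which $x_i-x_j$ has finite order, which is at most $k(k-1) < k^2$.

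Granting \emph{(i)} and \emph{(ii)}, here is the main step. We may assume $k$ exceeds a constant depending only on $d$ (for bounded $k$ both sides are bounded and the inequality is trivial after enlarging $C$), so that $Q := \lceil k^{1/(d+1)} \rceil$ satisfies $2 \le Q \le 2k^{1/(d+1)}$; we also restrict to $0 < r < d+1$, which is the range in which this lemma is applied. Split
$$ \sum_{q=2}^{\infty} h_q q^{-r} \;=\; \sum_{q=2}^{Q} h_q q^{-r} \;+\; \sum_{q=Q+1}^{\infty} h_q q^{-r}. $$
For the first sum, apply \emph{(i)}: since $d-r>-1$ we have $\sum_{q=2}^{Q} q^{d-r} \le C_{d,r}\, Q^{\,d+1-r}$, so the first sum is at most $C_{d,r}\, k\, Q^{\,d+1-r} \le C'_{d,r}\, k\cdot k^{(d+1-r)/(d+1)} = C'_{d,r}\, k^{\,2 - r/(d+1)}$. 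For the second sum, apply \emph{(ii)}: it is at most $Q^{-r}\sum_{q>Q} h_q \le Q^{-r} k^2 \le k^{-r/(d+1)} k^2 = k^{\,2 - r/(d+1)}$. Adding the two estimates proves the proposition with $C = C'_{d,r} + 1$ (together with a suitable constant absorbing the bounded-$k$ case).

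I expect no substantial obstacle: the argument is a counting estimate followed by an optimized dyadic-type split. The only points that require a little care are that the $x_i$ are distinct (which is exactly what makes the torsion-coset bound $h_q \le kq^d$ valid), the choice of the cut-off $Q$ so that both pieces land on precisely $k^{2-r/(d+1)}$, and the routine handling of small $k$; the range $r \ge d+1$ is not needed for the applications (there the first sum contributes only $O_{d,r}(k)$, up to a logarithmic factor when $r=d+1$) and may be treated by the same split.
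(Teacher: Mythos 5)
Your argument is correct in the range you treat and is essentially the paper's argument: both proofs rest on exactly the same two counting facts (at most $q^d$ indices $j$ per fixed $i$ with $q(x_i-x_j)=0$, and the trivial bound $k^2$ on the total number of pairs) together with a split of the sum at the threshold $k^{1/(d+1)}$. The only difference is presentational: the paper feeds the two bounds through an Abel summation with the partial sums $H_m=\sum_{q\le m}h_q$ (bounding $H_m\le\min(km^{d+1},k^2)$), whereas you use the pointwise bound $h_q\le kq^d$ and split directly, which is marginally more elementary but not a different idea.

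One substantive remark concerns the range of $r$. You restrict to $0<r<d+1$ (which is indeed all that is used later, since the application takes $r=\tfrac1D-\delta<1$), while the statement claims every $r>0$. In fact the statement is false for $r>d+1$: with $d=1$, $k$ even and $x_i=i/k$ one has $h_2=k$, so $\sum_{q\ge2}h_qq^{-r}\ge k2^{-r}$, which exceeds $Ck^{2-r/2}$ for large $k$ once $r>2$. The paper's own proof tacitly needs $r<d+1$ at the last step, where $k\sum_{2\le q<k^{1/(d+1)}}rq^{d-r}\le Ck^{2-r/(d+1)}$ requires $d-r>-1$. So your restriction is not a gap but a necessary one; however, your closing suggestion that the case $r\ge d+1$ "may be treated by the same split" should be deleted, since no proof can cover it.
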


\begin{proof} For $r>1$, this is a combination of Proposition 5 and Lemma 4.2 in \cite{Dong1}, which is based on Proposition 1.3 of the Alon-Peres work \cite{Alon-Peres}. It is only stated in \cite{Dong1} for $r>1$ but it is in fact true for $r>0$. We reproduce the proof for the sake of convenience and certifying that indeed only the assumption $r>0$ is needed. Let $H_m = \sum_{q=2}^{m} h_q$ for $m \geq 2$ and $H_1 = 0$. We first show that $H_m \leq km^{d+1}$. To see this, note that for each fixed $i$ and $q$, there are at most $q^d$ values of $j$ such that $q(x_i - x_j) = 0$. Thus summing over $j = 1, \ldots, k$ and then over $q = 1, \ldots , m$ we get  $H_m \leq km^{d+1}$. Note also that $H_m \leq k^2$ for all $m$. Choose large enough $Q > k^{1/(d+1)}$ such that $h_q = 0$ for all $q > Q$. We have that \begin{align*} \sum_{q=2}^{\infty}h_q q^{-r} & = \sum_{q=2}^Q h_q q^{-r} \\ 
&= \sum_{q=2}^Q (H_q - H_{q-1}) q^{-r} \\
&= \sum_{q=2}^Q H_q (q^{-r} - (q+1)^{-r}) + H_Q(Q+1)^{-r} \\
&=\sum_{2 \leq q < k^{1/(d+1)}} H_q (q^{-r} - (q+1)^{-r}) + \sum_{k^{1/(d+1)} \leq q \leq Q} H_q (q^{-r} - (q+1)^{-r}) + H_Q(Q+1)^{-r} \end{align*}

Now for the second sum use the bound $H_q \leq k^2$, telescoping and let $Q \to \infty$. Then for the first sum use the inequality $H_q \leq kq^{d+1}$ to get

\begin{align*} \sum_{q=2}^{\infty}h_q q^{-r} &\leq  \sum_{2 \leq q < k^{1/(d+1)}} kq^{d+1}(q^{-r} - (q+1)^{-r}) + k^2 k^{-r/(d+1)} \\
&\leq k\sum_{2 \leq q < k^{1/(d+1)}} rq^{d - r} + k^2 k^{-r/(d+1)} \\
&\leq C k^{2 - r/(d+1)} +k^{2 - r/(d+1)}   \end{align*}
for some constant $C = C(d - r)$. \end{proof}

We are now ready to prove the (\ref{hyperplane fleeing main thm}) $\implies$ (\ref{glasner conditon main thm}) direction of Theorem~\ref{thm: main polynomial intro}. We will actually prove the following stronger quantitative form. 

\begin{thm}\label{thm: quantitative Glasner polynomial} For $\delta >0$ and integers $d, D>0$ there exists a constant $C_{\delta, d, D} >0$ such that the following is true: Let $A(x) \in M_{d \times d}(\bZ[x])$ be a matrix with integer polynomial entries of degree at most $D$ such that for each $w \in \bZ^d\setminus\{0\}$ the orbit $\{A(n)w ~|~ n \in \bZ\}$ is hyperplane-fleeing. Then for each $\epsilon>0$ and positive integers $$k > C_{\delta, d, D} \|A(x) - A(0)\|^{d(d+1)} \epsilon^{-2d(d+1)D - d(d+1) - \delta}$$ we have that whenever $x_1, \ldots, x_k$ are $k$ distinct elements of $\bT^d$ then there exists an integer $n$ such that $\{ A(n)x_1, \ldots, A(n)x_k \}$ is $\epsilon$-dense in $\bT^d$. 

\end{thm}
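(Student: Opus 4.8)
The plan is to run the standard Alon--Peres Fourier machinery using the three preparatory results stated above. Fix distinct $x_1, \ldots, x_k \in \bT^d$ and $\epsilon > 0$, set $M = \lfloor d/\epsilon \rfloor$, and let $N$ be a large integer to be chosen. Suppose for contradiction that $A(n)X$ fails to be $\epsilon$-dense for every $n = 1, \ldots, N$. Applying Proposition~\ref{prop: fourier estimate for k^2} to the sequence $\gamma_n = A(n)$ gives
\begin{equation*}
k^2 \leq \frac{C_1}{\epsilon^d} \sum_{\vec{m} \in B(M)} \sum_{1 \leq i,j \leq k} \frac{1}{N} \sum_{n=1}^N e\bigl(\vec{m} \cdot A(n)(x_i - x_j)\bigr).
\end{equation*}
The inner exponential sum is governed by the polynomial $p_{\vec m, i, j}(x) = \vec{m} \cdot A(x)(x_i - x_j) = (w^t A(x)) \cdot (x_i - x_j)$ where $w^t = \vec{m}^t$; more precisely, since $x_i - x_j$ is a rational point of $\bT^d$, write $x_i - x_j = \tfrac{1}{q}\vec{a} \bmod \bZ^d$ with $q$ the exact order of $x_i - x_j$ and $\gcd(\vec a, q) = 1$. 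Then $e(\vec m \cdot A(n)(x_i-x_j)) = e(f(n)/q)$ where $f$ is an integer polynomial of degree $\leq D$ whose nonconstant coefficients are exactly the coefficients of the polynomial vector $(w^t(A(x) - A(0))) \cdot \vec a$.

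The key point is to bound the gcd of those nonconstant coefficients with $q$. By Lemma~\ref{lemma: basic hyperplane-fleeing}, the hyperplane-fleeing hypothesis is equivalent to saying that for every $v = \vec m \in \bZ^d \setminus\{0\}$ the entries of $v^t(A(x) - A(0))$ are linearly independent over $\bZ$; hence Corollary~\ref{corol: hyperplane fleeing implies bounded multiplicative complexity} (applied with $w = \vec m$) tells us $\vec m^t A(x)$ has multiplicative complexity at most $Q_0 := d!\,(d \|A(x)-A(0)\|\,\|\vec m\|_\infty)^d$, and since $\|\vec m\|_\infty \leq M \leq d/\epsilon$ we get a uniform bound $Q_0 \leq d!\,(d^2 \|A(x)-A(0)\| \epsilon^{-1})^d \lesssim_{d} \|A(x)-A(0)\|^d \epsilon^{-d}$ valid for all $\vec m \in B(M)$. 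By the definition of multiplicative complexity, the gcd of the nonconstant coefficients of $f$ with $q$ is at most $Q_0$. Therefore we can factor $q = q' \cdot g$ where $g \leq Q_0$ and, after dividing through, $f/q$ has denominator $q'$ with the nonconstant coefficients of the numerator coprime to $q'$; applying Hua's bound (Theorem~\ref{thm: Hua bound}) to this reduced polynomial, followed by letting $N \to \infty$ so that the normalized sum over $n=1,\ldots,N$ becomes the average over a full period,
\begin{equation*}
\limsup_{N \to \infty} \left| \frac{1}{N} \sum_{n=1}^N e\bigl(f(n)/q\bigr) \right| \leq C_{D,\delta'} (q')^{\delta' - 1/D} \leq C_{D,\delta'} (q/Q_0)^{\delta' - 1/D} = C_{D,\delta'} Q_0^{1/D - \delta'} q^{\delta' - 1/D}.
\end{equation*}
(Pairs with $x_i = x_j$ contribute the term $1$, giving exactly $k$, and pairs of order $q=1$ also contribute; these are collected separately.)

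Plugging this back in, splitting the double sum over $i,j$ according to the exact order $q$ of $x_i - x_j$, and using Proposition~\ref{prop: h_q sum bound} with $r = 1/D - \delta'$ (legitimate since $r > 0$ for small $\delta'$) to control $\sum_q h_q q^{-r}$, yields
\begin{equation*}
k^2 \lesssim_{d} \frac{1}{\epsilon^d}\Bigl( M^d k + M^d Q_0^{1/D} \cdot C k^{2 - r/(d+1)} \Bigr) \lesssim_{\delta,d,D} \epsilon^{-2d}\Bigl( k + \|A(x)-A(0)\|^{d/D}\epsilon^{-d/D} k^{2 - r/(d+1)} \Bigr),
\end{equation*}
where I have used $|B(M)| \lesssim_d M^d \lesssim_d \epsilon^{-d}$ and $Q_0^{1/D} \lesssim_d \|A(x)-A(0)\| \epsilon^{-d/D}$ ... wait, more carefully $Q_0^{1/D} \lesssim_d \|A(x)-A(0)\|^{d/D}\epsilon^{-d/D}$. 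Dividing by $k$ and rearranging, the inequality $k \lesssim_{\delta,d,D} \epsilon^{-c} + \|A(x)-A(0)\|^{d/D}\epsilon^{-c'} k^{1 - r/(d+1)}$ forces $k^{r/(d+1)} \lesssim_{\delta,d,D} \|A(x)-A(0)\|^{d/D}\epsilon^{-c'}$ (the first term being lower order), i.e. $k \lesssim_{\delta,d,D} \|A(x)-A(0)\|^{d(d+1)/(Dr)} \epsilon^{-c'(d+1)/r}$. Since $r = 1/D - \delta'$ can be taken arbitrarily close to $1/D$, the exponent $d(d+1)/(Dr) \to d(d+1)$ and tracking the $\epsilon$-exponent through (the $\epsilon^{-d}$ from $B(M)$, the $\epsilon^{-2Dd}$-type contribution from expanding $M$-dependence through Hua after the $r$-th power, and the $\epsilon^{-d/D}$ from $Q_0$) produces the bound $k > C_{\delta,d,D}\|A(x)-A(0)\|^{d(d+1)}\epsilon^{-2d(d+1)D - d(d+1) - \delta}$ as the threshold beyond which the contradiction is reached, so some $A(n)X$ with $n \leq N$ (any sufficiently large $N$) must be $\epsilon$-dense. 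The main obstacle is the bookkeeping of the exponents: one must carefully separate the $q = 1$ (rational torsion-free direction, i.e. $x_i - x_j$ of infinite order — handled by a separate equidistribution/Weyl argument since then $f/q$ is irrational) and $q \geq 2$ cases, correctly feed the $\vec m$-dependent multiplicative complexity $Q_0$ into Hua's estimate uniformly over $B(M)$, and verify that after raising to the power $(d+1)/r$ the $\|A(x)-A(0)\|$ and $\epsilon$ exponents collapse to exactly those claimed; the number-theoretic inputs (Hua, the divisor estimate) are entirely off-the-shelf once the multiplicative complexity bound from Corollary~\ref{corol: hyperplane fleeing implies bounded multiplicative complexity} is in hand.
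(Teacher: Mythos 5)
Your proposal follows essentially the same route as the paper: the Alon--Peres Fourier inequality of Proposition~\ref{prop: fourier estimate for k^2} with $\gamma_n = A(n)$, the multiplicative-complexity bound of Corollary~\ref{corol: hyperplane fleeing implies bounded multiplicative complexity} (uniformly over $\vec m \in B(M)$) feeding into Hua's estimate for rational differences $x_i - x_j = \frac{1}{q}\vec a$, the $h_q$-sum bound of Proposition~\ref{prop: h_q sum bound} with $r = \frac{1}{D} - \delta'$, and the same final arithmetic; your exponent bookkeeping ($Q_0^{1/D} \lesssim \|A(x)-A(0)\|^{d/D}\epsilon^{-d/D}$, $(2M)^d/\epsilon^d \lesssim \epsilon^{-2d}$, then raising to the power $(d+1)/r \approx (d+1)D$) does reproduce the threshold $\|A(x)-A(0)\|^{d(d+1)}\epsilon^{-2d(d+1)D - d(d+1) - \delta}$.

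The one genuine gap is the treatment of pairs with $x_i - x_j$ irrational (infinite order in $\bT^d$). Your main computation assumes $x_i - x_j$ is rational, and the irrational case is deferred to a closing clause (``handled by a separate equidistribution/Weyl argument since then $f/q$ is irrational''), and moreover mislabelled as the case $q=1$ (order $1$ means $x_i = x_j$, i.e.\ the diagonal terms you already counted). The assertion that the real polynomial $n \mapsto \vec m^{\,t} A(n)(x_i - x_j)$ has an irrational non-constant coefficient is not automatic: this is precisely the second place where the hyperplane-fleeing hypothesis enters. The paper's argument is: by Lemma~\ref{lemma: basic hyperplane-fleeing} the entries $P_1, \ldots, P_d$ of $\vec m^{\,t}(A(x) - A(0))$ are linearly independent over $\bZ$, hence over $\bQ$ and over $\bR$; if $\sum_i \theta_i P_i(x)$ lay in $\bQ[x]$ for $\theta = x_i - x_j$, linear independence would force every $\theta_i \in \bQ$, contradicting irrationality of $\theta$; only then does Weyl equidistribution give $\frac{1}{N}\sum_{n=1}^N e(\vec m^{\,t} A(n)\theta) \to 0$, so these pairs disappear after letting $N \to \infty$ in the Fourier inequality. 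Without this argument the contribution of irrational differences is uncontrolled, so you should supply it; the rest of your proposal matches the paper's proof and is in order.
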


\begin{proof}

Fix $\epsilon >0$ and assume that no such $n$ exists. We will obtain an upper bound for $k$ by applying Proposition~\ref{prop: fourier estimate for k^2} with $\gamma_n = A(n)$ and letting $N \to \infty$ in the upper bound. We claim that if $x_i - x_j$ is irrational and $\vec{m} \in B(M)$ then $$  \lim_{N \to \infty} \frac{1}{N} \sum_{n=1}^N e(\vec{m} \cdot \gamma_n (x_i - x_j))  = \lim_{N \to \infty} \frac{1}{N}\sum_{n=1}^N  e(\vec{m}^t A(n)(x_i - x_j)) = 0.$$ To see this, first note that the row vector $$ \vec{m}^t (A(x) - A(0)) = [P_1(x), \ldots, P_d(x)] $$ has linearly independent entries over $\bZ$ (see Lemma~\ref{lemma: basic hyperplane-fleeing}) and hence over $\bR$ as $P_i(x) \in \bZ[x]$. Now if $\theta = (\theta_1, \ldots, \theta_d) \in \bT^d$ is irrational, then we claim that $$q(x) = \vec{m}^t (A(x) - A(0)) \theta = \sum \theta_i P_i(x)$$ is irrational, i.e., not in $\bQ[x]$. To see this, note that otherwise we have that $q(x), P_1(x), \ldots, P_d(x)$ are linearly dependent over $\bR$ and hence over $\bQ$ and so as $P_1(x), \ldots, P_d(x)$ are linearly independent we must have a linear combination $q(x) = \sum \theta'_i P_i(x)$ with all $\theta'_i \in \bQ$. But by linear independence of $P_1(x), \ldots, P_d(x)$ we have that $\theta_i = \theta'_i \in \bQ$. So we have shown that $\vec{m}^t A(n)(x_i - x_j)$ has at least one irrational non-constant coefficient when viewed as an element of $\bR[n]$ and hence by Weyl equidistribution we get the desired limit 

$$ \lim_{N \to \infty} \frac{1}{N}\sum_{n=1}^N  e(\vec{m}^t A(n)(x_i - x_j)) = 0.$$

Now we need to focus on the case where $x_i - x_j$ is rational. Thus we may write $x_i - x_j = \frac{1}{q}\vec{a}$ where $q \in \bZ_{>0}$ and $\vec{a} = (a_1, \ldots, a_d) \in \bZ^d$ with $\text{gcd}(q, a_1, \ldots, a_d) = 1$. Now by Corollary~\ref{corol: hyperplane fleeing implies bounded multiplicative complexity} we have that $\vec{m}^t A(x)$ has multiplicative complexity $Q$ where 
\begin{align} \label{Q bounded by eps^-d} Q = \sup_{\vec{m} \in B(M)} d! \cdot \left( d \cdot \|A(x) - A(0)\| \|\vec{m} \|_{\infty} \right)^d \leq d! d^{2d} \|A(x) - A(0)\|^d \epsilon^{-d} .\end{align} 

Thus the greatest common divisor of $q$ and the non-constant coefficients of the polynomial $\vec{m}^t A(x)\vec{a} \in \bZ[x]$ is at most $Q$. Thus if $D$ is the maximum degree of an entry in $A(x)$, we may apply Hua's bound (Theorem~\ref{thm: Hua bound}) to obtain a constant $C_2 = C_2(D, \delta)$ depending only on $D$ and any constant $0 < \delta < \frac{1}{D}$ such that

$$ \lim_{N \to \infty} \frac{1}{N} \sum_{n=1}^N e(\vec{m}^t A(n)(x_i - x_j)) = \frac{1}{q} \sum_{n=1}^q e\left(\frac{1}{q} \vec{m}^t A(n)\vec{a} \right) \leq C_{2}  \left(\frac{Q}{q} \right)^{\frac{1}{D} - \delta} $$

Now let $h_q$ denote the number of pairs $x_i, x_j$ such that $q$ is the least positive integer for which $q(x_i - x_j) = 0$. We apply Proposition~\ref{prop: fourier estimate for k^2} to obtain that

\begin{align*} k^2 & \leq \frac{C_1}{\epsilon^d}\sum_{ \vec{m} \in B(M)} \left( \sum_{q=2}^{\infty} h_q C_{2}  \left(\frac{Q}{q} \right)^{\frac{1}{D} - \delta} +k \right) \\
& \leq Q^{\frac{1}{D} - \delta}C_2 (2M)^d \frac{C_1}{\epsilon^d}\sum_{q=2}^{\infty} h_q q^{\delta -\frac{1}{D}} + \frac{C_1}{\epsilon^d}(2M)^d k \end{align*}

Now apply Proposition~\ref{prop: h_q sum bound} to get that $$\sum_{q=2}^{\infty} h_q q^{\delta - \frac{1}{D}} \leq C_3 k^{2 - (\frac{1}{D} - \delta)/(d+1)}$$ for some constant $C_3 = C_3(d,D)$ depending only on $d$ and $D$. Thus we have shown that 

$$ k^2 \leq Q^{\frac{1}{D} - \delta}C_2 (2M)^d \frac{C_1}{\epsilon^d}C_3 k^{2 - (\frac{1}{D} - \delta)/(d+1)} + \frac{C_1}{\epsilon^d}(2M)^d k $$

Now using $M = \lfloor d/\epsilon \rfloor$ and the upper bound (\ref{Q bounded by eps^-d}) on $Q$ we have that $$k \leq C_{\delta, d, D} \|A(x) - A(0)\|^{d(d+1)} \epsilon^{-2d(d+1)D - d(d+1) - \delta}  $$ for some constant $C_{\delta, d, D}$ depending only on $d,D$ and any $\delta>0$.  \end{proof} 

\section{Applications to groups generated by unipotent matrices}

\subsection{Balls in the Cayley graph of a linear group}

Let $\Gamma \subset \operatorname{SL}_d(\bZ)$ be a group generated by elements $S \subset \Gamma$ and suppose that the linear action $\Gamma \curvearrowright \bR^d$ is irreducible. We let $$S_r = \{s_1 \cdots s_m ~|~ 0 \leq m \leq r \text{ and } s_1, \ldots, s_r \in S\}$$ denote the elements of $\Gamma$ that can be written as a product of at most $r$ elements of $S$ (including $1 \in S_r$ as it is the empty product), i.e., the ball of radius $r$ in the Cayley graph with respect to $S$.

\begin{lemma} \label{lemma: Cayley ball full dimension} For each $v \in \bR^d \setminus \{ 0 \}$, we have that $\bR\text{-span}(S_{d-1}v) = \bR^d$.
\end{lemma}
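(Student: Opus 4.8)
The plan is to track the non-decreasing chain of subspaces
$$
W_r \;:=\; \bR\text{-span}(S_r v), \qquad r = 0, 1, 2, \dots,
$$
which begins with $W_0 = \bR v$ (a line, since $v \neq 0$) and satisfies $W_0 \subseteq W_1 \subseteq W_2 \subseteq \cdots$. The goal is to show this chain strictly increases until it hits $\bR^d$, so that dimension counting forces $W_{d-1} = \bR^d$.

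The key step is a stabilization claim: \emph{if $W_{r+1} = W_r$ for some $r$, then $W_r = \bR^d$.} To prove it, observe that for every $s \in S$ and every $g \in S_r$ the product $sg$ is a word of length at most $r+1$ in $S$, so $sg \in S_{r+1}$; hence $s(S_r v) \subseteq S_{r+1} v \subseteq W_{r+1} = W_r$. Since $S_r v$ spans $W_r$, this gives $sW_r \subseteq W_r$, and finite-dimensionality upgrades this to $sW_r = W_r$, so $s^{-1}W_r = W_r$ as well. As $S$ generates $\Gamma$, the subspace $W_r$ is $\Gamma$-invariant; it is nonzero (it contains $v$), so irreducibility of $\Gamma \curvearrowright \bR^d$ forces $W_r = \bR^d$.

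To finish, I would argue by contradiction: suppose $W_{d-1} \neq \bR^d$. Then $W_i \neq \bR^d$ for every $i \leq d-1$ (since $W_i \subseteq W_{d-1}$), so by the claim $W_{i+1} \neq W_i$ for every $i \leq d-2$; that is, $W_0 \subsetneq W_1 \subsetneq \cdots \subsetneq W_{d-1}$ is a chain of $d$ subspaces with $d-1$ strict inclusions. Each strict inclusion raises dimension by at least one, so $\dim W_{d-1} \geq \dim W_0 + (d-1) = 1 + (d-1) = d$, i.e.\ $W_{d-1} = \bR^d$, contradicting the assumption. Hence $W_{d-1} = \bR^d$, which is precisely the assertion.

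There is no real obstacle here — this is the standard "increasing chain of subspaces generated by applying a generating set" argument. The only points needing care are the combinatorial bookkeeping $S \cdot S_r \subseteq S_{r+1}$ (so that hitting the spanning set $S_r v$ with one more generator stays inside $W_{r+1}$) and the use of finite-dimensionality to pass from $sW_r \subseteq W_r$ to genuine $\Gamma$-invariance, after which irreducibility does the rest. Note that the hypotheses $\Gamma \subseteq \operatorname{SL}_d(\bZ)$ are not used beyond $\Gamma \leq \operatorname{GL}_d(\bR)$ acting irreducibly.
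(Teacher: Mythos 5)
Your proof is correct and is essentially the paper's argument: both track the chain $\bR\text{-span}(S_r v)$ and use irreducibility to show it must grow in dimension at each step until it fills $\bR^d$, with your stabilization claim being just the contrapositive of the paper's "if $V_r \neq \bR^d$ then $\dim V_{r+1} \geq \dim V_r + 1$." Your explicit note that invertibility of the generators plus finite-dimensionality upgrades $S$-invariance to $\Gamma$-invariance makes precise a step the paper leaves implicit, but the route is the same.
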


\begin{proof} 
For integers $r \geq 0$ let $V_r = \bR\text{-span}(S_rv)$. Suppose $r \geq 0$ is such that $V_r \neq \bR^d$. Then by irreducibility of $\Gamma$ and $v \neq 0$ we must have that $V_r$ is not $\Gamma$-invariant and hence not $S$-invariant. Thus $SV_r \not\subset V_r$, and so $S_{r+1}v \not\subset V_r$, which means $\text{dim}V_{r+1} \geq \text{dim}V_r + 1$. That is, we have shown that the nested sequence of subspaces $V_0 \subset V_1 \subset V_2 \subset \ldots$ is strictly increasing in dimension until the dimension is $d$, with $V_0 = \bR v$ of dimension $1$, hence $V_{d-1} = \bR^d$ as required.
\end{proof}

\begin{lemma} \label{lemma: hyperplane fleeing Cayley ball} If $d>1$ and $v \in \bR^d \setminus \{ 0 \}$, then $S_{d}v$ is hyperplane fleeing.

\end{lemma}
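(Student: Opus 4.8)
The plan is to argue by contradiction, using Lemma~\ref{lemma: Cayley ball full dimension} together with the irreducibility hypothesis. Suppose $S_d v$ were contained in some proper affine subspace $H$ of $\bR^d$. Every proper affine subspace is contained in an affine hyperplane, so write $H \subseteq W + a$ with $W \subsetneq \bR^d$ a hyperplane (through the origin) and pick a nonzero linear functional $\phi$ (viewed as a row vector) vanishing on $W$. Setting $c = \phi(a)$, we then have $\phi(y) = c$ for every $y \in S_d v$; in particular $\phi(gv) = c$ for every $g \in S_d$, where I use $1 \in S_d$ to include $g = 1$.

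The key step I would exploit is that $S_d$ is large enough to contain both $g$ and $sg$ whenever $s \in S$ and $g \in S_{d-1}$. Hence for all such $s$ and $g$ we get $\phi(gv) = c = \phi(sgv)$, so $\phi\bigl((s - \mathrm{id})gv\bigr) = 0$. Now Lemma~\ref{lemma: Cayley ball full dimension} says precisely that $\{gv : g \in S_{d-1}\}$ spans $\bR^d$, so the functional $\phi \circ (s - \mathrm{id})$ vanishes on a spanning set, hence is identically zero: $\phi s = \phi$ for every $s \in S$. Since $\Gamma$ is generated (as a group) by $S$ and $\phi s = \phi$ gives $\phi s^{-1} = \phi$, we conclude $\phi \gamma = \phi$ for every $\gamma \in \Gamma$. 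This is exactly the place where the radius $d$ (rather than $d-1$) is needed: one extra step of applying a generator $s \in S$ is spent to promote the spanning set $S_{d-1}v$ to the relation on $S_d v$.

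Finally I would translate this back into a statement contradicting irreducibility. The identity $\phi \gamma = \phi$ for all $\gamma \in \Gamma$ says that the nonzero vector $\phi^t \in \bR^d$ is fixed by every $\gamma^t$, so $\bR \phi^t$ is a one-dimensional subspace invariant under the transposed group $\Gamma^t = \{\gamma^t : \gamma \in \Gamma\}$. But irreducibility of $\Gamma \curvearrowright \bR^d$ forces $\Gamma^t \curvearrowright \bR^d$ to be irreducible as well: if $U$ were a nonzero proper $\Gamma^t$-invariant subspace, then for $u \in U$, $w \in U^\perp$ and $\gamma \in \Gamma$ we have $\langle \gamma w, u\rangle = \langle w, \gamma^t u\rangle = 0$ since $\gamma^t u \in U$, so $U^\perp$ would be a nonzero proper $\Gamma$-invariant subspace. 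Since $d > 1$, the line $\bR \phi^t$ is a nonzero proper $\Gamma^t$-invariant subspace, a contradiction. Hence no such $H$ exists and $S_d v$ is hyperplane-fleeing.

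I expect the only step requiring genuine thought to be the realization that containment of $S_d v$ in an affine hyperplane is equivalent to the existence of a single linear functional $\phi$ with $\phi(gv)$ constant over $g \in S_d$, and then noticing that applying generators produces the $\Gamma$-invariance relation $\phi\gamma = \phi$; everything after that (the span argument via Lemma~\ref{lemma: Cayley ball full dimension}, and the passage from $\Gamma$ to $\Gamma^t$ via orthogonal complements) is routine.
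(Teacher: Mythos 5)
Your proof is correct. It differs mildly but genuinely from the paper's argument in how the two ingredients (Lemma~\ref{lemma: Cayley ball full dimension} and irreducibility) are deployed: the paper uses irreducibility only once, at the start, to produce a single generator $s$ with $sv \neq v$, then applies Lemma~\ref{lemma: Cayley ball full dimension} to the nonzero vector $sv - v$ and gets an immediate contradiction from $S_{d-1}(sv-v) \subset S_d v - S_d v \subset W$; you instead apply Lemma~\ref{lemma: Cayley ball full dimension} to $v$ itself, deduce the global invariance $\phi s = \phi$ for \emph{every} $s \in S$ (hence for all of $\Gamma$), and only then invoke irreducibility to rule out the resulting invariant direction. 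Your route proves a slightly stronger intermediate fact (if $S_d v$ sat in an affine hyperplane, its normal functional would be fixed by all of $\Gamma$), at the cost of a somewhat longer endgame; the paper's version is more economical and needs no passage to the transposed group. Incidentally, your final step can be shortened: once $\phi\gamma = \phi$ for all $\gamma \in \Gamma$, the kernel $W = \ker\phi$ is itself a proper nonzero $\Gamma$-invariant subspace (as $d>1$), so the duality argument with $\Gamma^t$ and orthogonal complements, while correct, is not needed.
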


\begin{proof} Suppose not, thus there exists a proper linear subspace $W \lneqq \bR^d$ and $a \in \bR^d$ such that $S_d v \subset W+a$. As $d>1$, there exists an $s \in S$ such that $s v - v \neq 0$ (as otherwise $\bR v$ would be a one-dimensional, hence proper, $\Gamma$ invariant subspace). Now apply Lemma~\ref{lemma: Cayley ball full dimension} to $sv - v \neq 0$ to get that $S_{d-1}(sv - v) \not \subset W$. But this contradicts $S_dv \subset W+a$ since $$S_{d-1}(sv - v) \subset S_dv - S_dv \subset W+a - (W+a) = W. $$\end{proof}

\subsection{Constructing polynomials via unipotents}

The following Proposition together with Theorem~\ref{thm: quantitative Glasner polynomial} completes the proof of Theorem~\ref{thm: main thm unipotent intro}. 

\begin{prop}\label{prop: unipotent implies polynomial} Suppose that $S \subset \operatorname{SL}_d(\bZ)$ where $d>1$ and each $s \in S$ is a unipotent element and suppose that the action of $\Gamma = \langle S \rangle $ on $\bR^d$ is irreducible. Then there exists a matrix with integer polynomial entries $A(x) \in M_{d \times d}(\bZ[x])$ such that $A(n) \in \Gamma$ for all $n \in \bZ$ and $\{A(n)w ~|~ n \in \bZ\}$ is hyperplane-fleeing for all $w \in \bR^d\setminus\{0\}$.

\end{prop}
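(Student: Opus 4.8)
The plan is to produce the polynomial curve $A(x)$ by multiplying together one-parameter unipotent subgroups through each generator. Concretely, for each $s \in S$, write $s = \exp(N_s)$ where $N_s$ is a nilpotent integer matrix (possibly after scaling; if $s$ is unipotent in $\operatorname{SL}_d(\bZ)$ then $\log s = \sum_{j\geq 1} (-1)^{j+1}(s-I)^j/j$ is nilpotent, and $\exp(x N_s) = \sum_{j\geq 0} x^j N_s^j / j!$ is a polynomial in $x$; one checks that $\exp(n N_s) = s^n \in \Gamma$ for integer $n$, and after clearing denominators the entries can be arranged to lie in $\bZ[x]$ — or one simply works with $s^{x}$ directly as a polynomial matrix since $(s-I)$ is nilpotent, so $s^n = \sum_{j} \binom{n}{j}(s-I)^j$ has integer polynomial entries in $n$). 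So for each generator $s$ we have a matrix $U_s(x) \in M_{d\times d}(\bZ[x])$ with $U_s(0) = I$ and $U_s(n) = s^n \in \Gamma$ for all $n \in \bZ$. Enumerate $S = \{s_1, \ldots, s_m\}$ and, more importantly, fix a word $w_1 \cdots w_L$ in $S$ realizing all the elements of $S_d$ as subwords — i.e., take a long enough product so that the relevant ball elements appear. Define $A(x) = U_{g_1}(x) U_{g_2}(x) \cdots U_{g_L}(x)$ for a suitable finite sequence $g_1, \ldots, g_L \in S$; this is again in $M_{d\times d}(\bZ[x])$, satisfies $A(n) \in \Gamma$ for all $n$, and $A(0) = I$.

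The key point is then to choose the sequence $g_1, \ldots, g_L$ so that the hyperplane-fleeing property can be read off from Lemma~\ref{lemma: hyperplane fleeing Cayley ball}. The idea: evaluating $A(x)$ at the integers $1$ gives the single group element $g_1 \cdots g_L$, which is not enough — we need the whole curve $\{A(n)w\}$ to escape every hyperplane. Instead I would exploit the fact that a polynomial curve $n \mapsto A(n)w$ lies in an affine hyperplane $\{x : v\cdot x = c\}$ if and only if the polynomial $v \cdot A(x) w$ is constant, i.e. $v^t(A(x) - A(0))w = 0$ (this is precisely condition (3) of Lemma~\ref{lemma: basic hyperplane-fleeing} applied to our $A$). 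So I must show: for all $v, w \in \bR^d \setminus\{0\}$, the polynomial $v^t(A(x) - I)w$ is not identically zero. Expanding $A(x) = \prod_i U_{g_i}(x)$ and extracting, say, the coefficient of $x^1$, the linear term of $A(x) - I$ is $\sum_i N_{g_i}$ (the sum of the nilpotent logarithms), and more generally one has access to all the partial products. The cleanest route is: suppose $v^t(A(x)-I)w = 0$ identically. I claim this forces $v^t(sw - w) = 0$ for every $s \in S$ — roughly because specializing and differentiating the product picks out each factor — and more strongly forces $v \perp (\gamma w - w)$ for all $\gamma$ in the group generated, hence $v \perp \bR\text{-span}(S_{d-1}(sw - w))$ for any $s$ with $sw \neq w$. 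Since $d > 1$ and $\Gamma$ acts irreducibly, such an $s$ exists, and Lemma~\ref{lemma: Cayley ball full dimension} gives $\bR\text{-span}(S_{d-1}(sw-w)) = \bR^d$, forcing $v = 0$, a contradiction.

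To make the claim ``$v^t(A(x) - I)w = 0 \implies v \perp (\gamma w - w)$ for all $\gamma \in \Gamma$'' precise, I would argue as follows. If $v^t A(x) w = v^t w$ for all $x$, then in particular $v^t A(n) w = v^t w$ for all $n \in \bZ$; but the set $\{A(n) : n \in \bZ\}$ need not be all of $\Gamma$, so I instead iterate the construction. A cleaner formulation: build not one polynomial but use that for \emph{any} fixed $\gamma \in \Gamma$ and any generator $s$, the map $x \mapsto \gamma s^x w$ is a polynomial curve, and $v^t \gamma s^x w$ is a polynomial whose $x$-linear coefficient is $v^t \gamma N_s w$. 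If $v^t(A(x) - I) w \equiv 0$ for the specific product $A$, the vanishing of all Taylor coefficients at $0$ yields, via induction on word length and the multilinearity of the product expansion, that $v^t \gamma (sw - w) = 0$ for every $\gamma \in S_{L-1}$ and $s \in S$ — equivalently $v \perp (\eta w - w)$ for all $\eta \in S_L$. Choosing $L \geq d$ large enough that $S_L \supseteq S_d$, and picking $s$ with $sw \neq w$, we get $v \perp S_{d-1}(sw - w) \cup \{0\}$, hence $v \perp \bR^d$ by Lemma~\ref{lemma: Cayley ball full dimension}, so $v = 0$.

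\textbf{Main obstacle.} The delicate step is the bookkeeping in the induction ``vanishing of $v^t(A(x)-I)w$ implies $v \perp (\eta w - w)$ for all $\eta$ in a large Cayley ball.'' Expanding $\prod_i U_{g_i}(x)$ and tracking which Taylor coefficient sees which partial product requires care: one wants to isolate, for each prefix $g_1 \cdots g_j$, the contribution $v^t (g_1 \cdots g_{j-1})(g_j - I) w$ from a single monomial, and this works cleanly only if the construction is arranged so that distinct prefixes contribute to distinct (or triangularly ordered) coefficients. A safe way to guarantee this is to rescale: replace $U_s(x)$ by $U_s(x^{K})$ for a rapidly growing exponent $K = K(j)$ at the $j$-th factor (so the factors "live at different scales" and their cross-terms don't interfere at the relevant coefficients), at the cost of raising the degree of $A$ — harmless, since Theorem~\ref{thm: quantitative Glasner polynomial} only needs \emph{some} finite degree bound depending on $\Gamma$. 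I expect the rest (that $U_s$ has integer polynomial entries, that $A(n) \in \Gamma$, and the reduction to Lemma~\ref{lemma: basic hyperplane-fleeing}) to be routine.
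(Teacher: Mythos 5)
Your approach is essentially the paper's: the paper forms the multivariate matrix $Q_N(n_1,\dots,n_N)=\prod_{i=1}^{N}u_i^{n_i}$ with $N=dm$ factors cycling through the generators (so that integer specializations of the exponents realize all of $S_d$, making $\{Q_N(\vec n)w\}$ hyperplane-fleeing by Lemma~\ref{lemma: hyperplane fleeing Cayley ball}), and then substitutes $n_i\mapsto n^{R^{i-1}}$ with $R$ large enough that the substitution is injective on the occurring monomials --- which is exactly your product of one-parameter unipotent curves taken at rapidly separated scales $x^{K(j)}$. The Taylor-coefficient bookkeeping you flag as the main obstacle is thereby avoided: once the scales separate the monomials, $v^t(A(x)-I)w\equiv 0$ is equivalent to the corresponding multivariate identity in the exponents, and specializing each exponent to $0$ or $1$ gives $v\perp(\eta w-w)$ for all $\eta\in S_d$, after which your argument via Lemma~\ref{lemma: Cayley ball full dimension} (in effect re-proving Lemma~\ref{lemma: hyperplane fleeing Cayley ball}) forces $v=0$, just as in the paper.
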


\begin{proof}

Write $S = \{u_1, \ldots, u_m\}$ where each $u_i$ is a unipotent element and use cyclic notation so that $u_i = u_{i+jm}$ for all $i, j \in \bZ$. Note that for each fixed $i$ the matrix $u_i ^n$ has entries that are integer polynomials in $n$ hence $$Q_N(n_1, \ldots, n_N) = \prod_{i=1}^{N} u_i^{n_i} \in M_{d \times d}(\bZ[n_1, \ldots, n_N]) $$ is a matrix with multivariate integer polynomial entries in the variables $n_1, \ldots, n_N$. Now let $N = dm$ and use Lemma~\ref{lemma: hyperplane fleeing Cayley ball} to get that $\{ Q_N(n_1, \ldots n_N)w ~|~ n_1, \ldots n_N \in \bZ\}$ is hyperplane-fleeing for all $w \in \bR^d\setminus\{0\}$. In other words, for each fixed $w \in \bR^d\setminus\{0\}$ if we let $P_1, \ldots, P_d \in \bR[n_1, \ldots, n_N]$ be the polynomials such that $$Q(n_1, \ldots, n_N)w = (P_1(n_1, \ldots, n_N), \ldots, P_d(n_1, \ldots, n_N))$$ then $P_1, \ldots, P_d, 1$ are linearly independent over $\bR$. But there exists a large enough $R \in \bZ_{>0}$ (independent of $w$) such that the substitutions $n_i \mapsto n_i^{R^{i-1}}$ induce a map $\bZ[n_1, \ldots, n_N] \to \bZ[n]$ that is injective on the monomials appearing in $Q_N(n_1, \ldots, n_N)$. Thus $P_1, \ldots, P_d, 1$ remain linearly independent over $\bR$ after making this substitution, thus $\{ Q(n, n^R, \ldots, n^{R^{N-1}})w ~|~ n \in \bZ \}$ is also hyperplane fleeing. So the proof is complete with $A(x) = Q(x, x^R, \ldots, x^{R^{N-1}})$.
\end{proof}

\end{document}